\newtheorem{theorem}{Theorem}[section]
\newtheorem{claim}[theorem]{Claim}
\newtheorem{majorclaim}[theorem]{Major Claim}
\newtheorem{conclusion}[theorem]{Conclusion}
\theoremstyle{definition}
\newtheorem{definition}[theorem]{Definition}
\newtheorem{discussion}[theorem]{Discussion}
\theoremstyle{remark}
\newtheorem{remark}[theorem]{Remark}
\newtheorem{notation}[theorem]{Notation}
\newcommand{\redq}[1]{{\color{red} #1}}
\newcommand{\hit}{{\rm ht}}
\newcommand{\fl}{{\rm fl}}
\newcommand{\qf}{{\rm qf}}
\newcommand{\otp}{{\rm otp}}
\newcommand{\lev}{{\rm lev}}
\newcommand{\sub}{{\rm sub}}  
\newcommand{\eend}{{\rm end}}
\newcommand{\Lev}{{\rm Lev}}
\newcommand{\eseq}{{\rm eseq}}
\newcommand{\suc}{{\rm suc}}
\newcommand{\level}{{\rm level}}
\newcommand{\seq}{{\rm seq}}
\newcommand{\seqq}{{\rm seq}}
\newcommand{\GEM}{{\rm GEM}}
\newcommand{\ZF}{{\rm ZF}}
\newcommand{\AC}{{\rm AC}}
\newcommand{\ZFC}{{\rm ZFC}}
\newcommand{\tp}{{\rm tp}}
\newcommand{\dom}{{\rm dom}}
\newcommand{\Cohen}{{\rm Cohen}}
\newcommand{\lex}{{\rm lex}}
\newcommand{\bfa}{{\mathbf a}}
\newcommand{\bfH}{{\mathbf H}}
\newcommand{\bff}{{\mathbf f}}
\newcommand{\bfG}{{\mathbf G}}
\newcommand{\bfb}{{\mathbf b}}
\newcommand{\bfm}{{\mathbf m}}
\newcommand{\bfc}{{\mathbf c}}
\newcommand{\bfV}{{\mathbf V}}
\newcommand{\bfT}{{\mathbf T}}
\newcommand{\bfB}{{\mathbf B}}
\newcommand{\EC}{{\rm EC}}
\newcommand{\pos}{{\rm pos}}
\newcommand{\Rang}{{\rm Rang}}
\newcommand{\rang}{{\rm rang}}
\newcommand{\cl}{{\rm  cl}}
\newcommand{\stp}{{\rm sim-tp}}
\newcommand{\lqq }{{``}}
\newcommand{\exiota}{{  }}
\newcommand{\icr}{{\rm incr}}
\newcommand{\rest}{{\restriction}}
\newcommand{\then}{{\underline{then}}}
\newcommand{\when}{{\underline{when}}}
\newcommand{\Iff}{{\underline{iff}}}
\newcommand{\cB}{{\mathscr B}}
\newcommand{\gB}{{\mathfrak B}}
\newcommand{\varp}{{\varepsilon}}
\newcommand{\cH}{{\mathscr H}}
\newcommand{\cF}{{\mathscr F}}
\newcommand{\bbL}{{\mathbb L}}
\newcommand{\bbN}{{\mathbb N}}
\newcommand{\bbP}{{\mathbb P}}
\newcommand{\cT}{{\mathscr T}}
\newcommand{\cU}{{\mathscr U}}
\DeclareMathOperator{\AP}{AP}
\DeclareMathOperator{\End}{end}
\def\mathunderaccent#1#2 {\let\theaccent#1\skewfactor#2
\mathpalette\putaccentunder}
\def\putaccentunder#1#2{\oalign{$#1#2$\crcr\hidewidth
\vbox to.2ex{\hbox{$#1\skew\skewfactor\theaccent{}$}\vss}\hidewidth}}
\def\name{\mathunderaccent\tilde-3 }
\newbox\noforkbox \newdimen\forklinewidth
\noforkbox\hbox{\box1\box0\relax}
\def\unionstick{\mathop{\copy\noforkbox}\limits}
\def\nonfork#1#2_#3{#1\unionstick_{\textstyle #3}#2}
\def\nonforkin#1#2_#3^#4{#1\unionstick_{\textstyle #3}^{\textstyle
    #4}#2}
\newbox\doesforkbox
\doesforkbox\hbox{\box1\box0\relax}
\def\nunionstick{\mathop{\copy\doesforkbox}\limits}
\def\fork#1#2_#3{#1\nunionstick_{\textstyle #3}#2}
\def\forkin#1#2_#3^#4{#1\nunionstick_{\textstyle #3}^{\textstyle
    #4}#2}
\newcommand{\stickT}{%
\setbox255=\hbox{\raise1ex\hbox{$\hspace{0.2pt}\,\bullet\,$}}
\mathord{\rlap{\hbox to\wd255{\hss\hbox{$|$}\hss}}
\box255}
}
\newcommand{\stickS}{%
\setbox255=\hbox{\raise0.6ex\hbox{$\scriptstyle\bullet$}}
\mathord{\rlap{\hbox to\wd255{\hss\hbox{$\scriptstyle|$}\hss}}
\box255}
}
\newenvironment{PROOF}[2][\proofname.]
   {\begin{proof}[#1]}
   {\end{proof}}
\numberwithin{equation}{section}
\begin{document}
\makeatletter\def\shfiuwefootnote{\gdef\@thefnmark{}\@footnotetext}\makeatother\shfiuwefootnote{Version 2026-01-01. See \url{https://shelah.logic.at/papers/1176/} for possible updates.}

\title{Partition theorems for expanded trees \\
1176}

\author{Saharon Shelah}

\address{Einstein Institute of Mathematics\\
Edmond J. Safra Campus, Givat Ram\\
The Hebrew University of Jerusalem\\
Jerusalem, 9190401, Israel\\
 and \\
 Department of Mathematics\\
 Hill Center - Busch Campus \\
 Rutgers, The State University of New Jersey \\
 110 Frelinghuysen Road \\
 Piscataway, NJ 08854-8019 USA}
 
\email{shelah@math.huji.ac.il}

\urladdr{http://shelah.logic.at}

\thanks{The author thanks Alice Leonhardt for the beautiful typing up to 2019. In later versions, the author thanks typing services generously funded by 
Craig Falls 
and we thank the typist   
for the careful and beautiful typing. 
We thank the ISF (Israel Science Foundation) grant 1838(19)  
(2019-1023) and grant 2320/23 (2023-2027)
for partially supporting this research. First typed November 14, 2018.    References like \cite[Th0.2=Ly5]{Sh:950} means the label of Th.0.2 is y5. The reader should note that the version in my website is usually more updated than the one in the mathematical archive.  Publication number 1176 in the author list of publications}

\subjclass[2030]{Primary: 03E02; Secondary: 03E35 }
\keywords{Ramsey theory, partition theorems, uncountable trees}

\date{December 29, 2025}

\begin{abstract}
    We look for partition theorems for large subtrees for suitable uncountable trees and colourings parallely to the statement $  \lambda \rightarrow (\mu )^n_\kappa $ such that possibly $ \lambda > \mu$.
    
    \noindent
    We concentrate on sub-trees of ${}^{\kappa \geq}2$ expanded by a well-ordering of each level. However, in the  embedding 
    the equality of levels is 
    preserved. 
    The gain is that  
    we get consistency results without large cardinals. 
    
    \noindent
    An intention is to  apply the results 
    to model theoretic problems. 
\end{abstract}

\maketitle

\setcounter{section}{-1}

\centerline{Annotated Content}
\bigskip

\noindent
\S0 \quad Introduction, pg. \pageref{0}
\bigskip

\noindent
\S(0A) \quad Background, pg. \pageref{0A}
\bigskip

\noindent
\S(0B) \quad Preliminaries, (label z) pg. \pageref{0B}
\bigskip

\noindent
\S1 \quad Partition Theorems, (label a), pg.\pageref{1B}

\noindent
\S1A \quad The definitions, pg.\pageref{1A}

\begin{enumerate}
    \item[${{}}$]   [We consider here  partition theorems for trees. This is used in \S1B.
    
    In~\ref{a28} define $\bfT$.  
    \newline
    Definition~\ref{a40} states the partition relation.]
\end{enumerate}
\bigskip   

\noindent
\S1B \quad  Forcing in $\ZFC$, (label b) pg. \pageref{1B}

\begin{enumerate}
    \item[${{}}$]  [We work on a partition theorem for trees, for the main case here,  we get consistency without the large cardinal. Naturally the price is having to vary the size of the cardinals (parallel to the  Erd\"os-Rado  theorem).]
\end{enumerate} 










\section{Introduction}\label{0}

\subsection{Background and Results}\label{0A}\

We continue two lines of research. One is set theoretic:  pure partition relations on trees and the other is model theoretic:  Hanf numbers
and non-deniability of well ordering, in particular related to $ \omega _1 $. This is related to the existence of $\GEM$ (generalized Eherenfuecht-Mostowski)  for suitable templates (see \cite{Sh:E59}), and applications to descriptive set theory.
 
Halpern-Levy \cite{HaLe67} had proved a milestone theorem on independence of versions of the axiom of choice: in $\ZF,\AC$ is strictly stronger
than the maximal prime ideal theorem (i.e. every Boolean algebra has a maximal ideal).

This work isolated a partition theorem\footnote{Using  not splitting to 2 but other finite splitting makes a minor difference; similarly here.} on the tree ${}^{\omega >}2$, necessary  for the proof.  This partition theorem was subsequently proved by Halpern-Lauchli \cite{HaLa66} and was a major and early theorem in Ramsey theory,  (so the proof above relies on it). 

See more in Laver \cite{Lv71}, \cite{Lv73} and \cite[AP,\S2]{Sh:a} and Milliken
\cite{Mil79}, \cite{Mil81}.

The \cite{HaLa66} proof uses induction, later Harrington found a different proof using forcing:  adding many Cohen reals and a name of a (non-principal) ultrafilter on $\bbN$.  Earlier, (on adding many reals and  a partition theorem) see Silver's proof  of $\Pi^1_1$-equivalence relations, in \cite{Sil80}.  
 
Now \cite[\S4]{Sh:288} turns to uncountable trees, i.e. for some $\kappa > \aleph_0$, we consider  trees $\cT$  which are sub-trees of $({}^{\kappa >}2,\triangleleft)$, such that (as in \cite{HaLa66}) for every level $\varp < \kappa$, either $(\forall \eta \in \cT \cap {}^\varp 2)(\eta \char 94 \langle 0 \rangle,\eta \char 94 \langle 1 \rangle \in \cT)$ or $(\forall \eta \in
\cT \cap {}^\varp 2)(\exists !   \iota < 2)[\eta \char 94 \langle \iota\rangle \in \cT]$;  
(and of course the first  occurs unboundedly often). But a new point is that we have to use a well ordering of $\cT \cap {}^\varp 2$ for $\varp < \kappa$. 

Naturally we add   \lqq is closed enough (that is under unions of increasing sequences
 of length $ < \kappa $)". Also colouring with
infinite number of colours, the proof uses ``measurable $\kappa$  which remains so when we add $\lambda$ many $\kappa$-Cohens for appropriate
$\lambda$"; it generalizes Harrington's proof.
This was continued  in several works, see Dobrinen-Hathaway \cite{DoHa17} and 
references there.  

We are here mainly interested in a weaker version
which is  enough for the  model theoretic applications we have in mind,  we start with  a large tree and get one of smaller cardinality,
in a sense this is solving the ``equations'' $X$  /(Erd\"os-Rado theorem) = \cite{Sh:288}/ (the partition relation of a weakly compact cardinal) = \cite{HaLa66} / (Ramsey theorem). On other consistent partition relation see Boney-Shelah \cite{Sh:F1822}, in preparation.

Turning to model theory see \cite{Sh:37}, \cite{Sh:49} and Dzamonja-Shelah \cite{Sh:692} where such indiscernibility is considered in a model theoretic context.  

A central direction in model theory in the sixties were two cardinal theorems.  For infinite cardinals $ \mu > \lambda,$ let $ K_{\mu, \lambda }$ be the class of models $ M $ such that $ M $ is of cardinality $ \mu $ and $ P^M $ of cardinality  $ \lambda $. The main problems were transfer, compactness and completeness. For connection to partition theorems, Morley's   proof of  \cite{Va65}, the  Vaught far apart two cardinal theorem  used  Erd\"os-Rado theorem;  generally see \cite{Sh:8}, \cite{Sh:E17}, \cite{Sh:74} and the survey \cite{Sh:85}.  Jensen's celebrated gap $ n $ two cardinal theorem solves  
those problems for e.g  $( \aleph _n , {\aleph_0} )$  \when \, $ \mathbf{V} = \mathbf{L} $. But can we get a nice picture in different universes?

Note that  by \cite{Sh:289}, \cite{Sh:E59}, consistently we have $\GEM$ (generalized Eherenfuecht-Mostowski) models for ordered graphs  
as index models, even omitting types. 

On a different direction  
D.Ulrich has asked me on $(*)_n$ below (and told me it has descriptive set theoretic consequences, see \cite{Sh:1141}). We intend to prove (in the sequel \cite{Sh:F2060}) that  for $ n < \omega$:  

\begin{enumerate} 
    \item[$(*)_n$] consistently 
    
    \begin{enumerate} 
        \item[(a)] if $\psi \in \mathbb{L} _{{\aleph_1}, {\aleph_0} }$ has a model $ M $ of cardinality $ \beth _{n+1}$ with 
        $ (P^M, < ^M )$ having order type $ \omega _1 $  \then \, $ \psi $ has a model $ N $ of cardinality $ \beth _{n + 1 }$
        and $ (P^N, < ^N ) $ is not well ordered, 
        
        \item[(b)] moreover, it is enough that $ M $ will have cardinality $\aleph _ \delta ,  \delta \ge  \beth _n ^{++}$,  
        
        \item[(c)] of course,  preferably not using large cardinals. 
    \end{enumerate} 
\end{enumerate} 

This requires consistency of many cases of partition relations on trees and more complicated structures, analysing GEM models. Much earlier we  have intended (mentioned in \cite[1.15]{Sh:702})
to prove the parallel for first order logic; and $ (\beth_n, {\aleph_0}) $, using  (many Cohen  
indestructible)  measurables  $ \kappa _1 < \cdots < \kappa _n $ as in \cite[\S4]{Sh:288} and forcing by blowing $ 2 ^ {\aleph_0} $  to $ \kappa _1$, 
$ 2^ {\kappa _1 }$ to $ \kappa _2 $ etc  relying
on \cite{Sh:288}; but have not carried out that.  

In preparation are  also 
solutions to the two cardinal problems above and 

\begin{enumerate}
    \item[$(*)$]  
    
    \begin{enumerate}
        \item[(a)] $\alpha_\bullet < \omega_1,\beth_{\alpha +1} = (\beth_\alpha)^{+ \omega_1 + 1}$ for $\alpha < \alpha_\bullet$ and well ordering of $\omega_1$ is not definable in $\{\EC_\psi(\beth_{\alpha_\bullet })\colon\psi \in \bbL_{\aleph_1,\aleph_0}\}$ or at least,
        
        \item[(b)]  as above  but for $\beth_{\alpha +1} = \aleph_{\beth^{++}_\alpha},$
        
        \item[(c)] parallel results replacing $ {\aleph_0} $ by $ \mu.$
    \end{enumerate}
\end{enumerate}

Contrary to the a priori expectation no large cardinal is used. 

In a  sequel \cite{Sh:F2064}  we intend also to deal with other partition relations  and with weakly compact cardinals.  

We thank Shimoni Garti  and Mark Po\'{o}r for many helpful comments. 

\subsection{Preliminaries}\label{0B}\

\begin{definition}\label{z7}
    If $\mu = \mu^{< \kappa}$ then ``\emph{for a $(\mu,\kappa)$-club of $u \subseteq X$ we have $\varphi(u)$}" means that: for  some $ \chi $ such that $\mu, X  
    \in {\mathscr H} ( \chi ) $ 
    and  e.g. $ \beth _3 (\mu + | X  
    |) < \chi $  and some $x \in \cH(\chi),$ if $x \in \cB \prec (\cH(\chi),\in), \, \|\cB\| = \mu, \, [\cB]^{< \kappa} \subseteq \cB$ and   $ \mu + 1 \subseteq  \cB$, \then \, the set $u = \cB \cap X$ satisfies $\varphi(u)$; there are other variants.
\end{definition}

\begin{definition}\label{z19}
    For $\kappa$ regular (usually $\kappa = \kappa^{<\kappa}$) and an ordinal $\gamma,$ the forcing $\bbP =$ Cohen$(\kappa,\gamma)$ of adding  $\gamma$   many $\kappa$-Cohen reals  is defined as follows:
    
    \begin{enumerate}
        \item[(A)]  $p \in \bbP$ \Iff:
        
        \begin{enumerate}
            \item[(a)]  $p$ is a function with domain from $[\gamma ]^{< \kappa}$,
            
            \item[(b)]  if $\alpha \in \dom(p)$ then $p(\alpha) \in {}^{\kappa >}2$,
        \end{enumerate}
        
        \item[(B)]  $\bbP \models p \le q$ \Iff:
        
        \begin{enumerate}
            \item[(a)]  $p,q \in \bbP$, 
            
            \item[(b)]  $\dom(p) \subseteq \dom(q)$,  
            
            \item[(c)]  if $\alpha \in \dom(p)$ then $p(\alpha) \trianglelefteq q(\alpha).$
        \end{enumerate}
        
        \item[(C)]  for $\alpha < \gamma $ let $\name\eta_\alpha = \bigcup \{p(\alpha) \colon p \in \name{\bfG}_{\bbP}$ satisfies $\alpha \in \dom(p)\}$, so $\Vdash_{\bbP} ``\name\eta_\alpha \in {}^\kappa 2"$, 
        
        \item[(D)]  for $u \subseteq \gamma$
        let $\bbP_u \coloneqq \{p \in \bbP \colon \dom(p) \subseteq u\},$ so $\bbP_u \lessdot \bbP$ and $\bar\eta_u = \langle \name\eta_\alpha:\alpha \in u\rangle$ is generic for $\bbP_u$.
    \end{enumerate}
\end{definition}

\begin{notation}\label{z22}
    1) We denote infinite cardinals by 	$ \kappa, \lambda, \mu, \chi, \theta, \partial $, and $ \sigma $ denotes a possibly finite cardinal.
	
    2) We denote ordinals by $ \alpha, \beta, \gamma, \delta, \varepsilon, \zeta, \xi $ and sometimes $ i,j$.
    	
    3) We denote natural numbers by $ k, {\ell}, m, n$ and sometimes $ i,j$.	
    
    4) Instead   of 
    e.g. $a_{i}$ we may write $a[i],$ particularly in sub-script; also $\kappa(+)$ means $\kappa^{+}.$  

    5) Let $ h[u] ;= \{ h(x): x \in u \}. $  
\end{notation} 

\begin{notation}\label{z25}
    Concerning Definition \ref{a28}: 
    
    1) Let $\cT$ denote members of $\bfT_{\fl}$ or of $\bfT_{\mathrm{wk}}$, writing  $\bfT$ mean it can be either.

    2) Similarly about $\cT_{1} \subseteq_{\mathrm{wk}} \cT_{2}$ or $\cT_{1} \subseteq_{\fl} \cT_{2}$ (or embedability).

    3) Also, in Definition \ref{a40}, we have either $\to_{\fl}$ or $\to_{\mathrm{wk}}$.
\end{notation}


\section{Partition Theorems}\label{1}

\subsection{The definitions}\label{1A}\

Here, we consider partitions on trees. For uncountable trees, we find the need to consider a well-ordering of each level, still preserving equality of level.  We may consider embeddings where equality of levels is not preserved, see Dzamonja-Shelah \cite{Sh:692}  (in the web version). This will suffice for the intended model theory application. Also, we may waive the completeness of the tree, but usually still like to have many branches. 

We intend  to deal with an intermediate one (and with the  weakly compact cardinal) in a sequel \cite{Sh:F2064}.

\begin{definition}\label{a28}
    1) Let $ \mathbf{T}_{\fl} $ be the class of structures $\cT$ such that:  

    \begin{enumerate}
        \item[(a)] $\cT = (u,<_*,E,<,\cap,S,R_0,R_1) = (u_{\cT},<^*_{\cT},E_{\cT},<_{\cT},\cap_{\cT},S_{\cT},R^0_{\cT},R^1_{\cT})$ but we may write $s \in \cT$ instead of $s \in u$,
        
        \item[(b)]  $(u,<_*)$ is  a well ordering, so  linear, $u$ non-empty,
        
        \item[(c)]  $<_{\cT}$ is a partial order included in $<_*$, 
        
        \item[(d)]  $(u,<_{\cT})$ is a tree, i.e. if $t \in \cT$ \then\, $\{s:s <_{\cT} t\}$ is well 
        ordered by $<_{\cT}$; 
        the level of $t$ is the order type of this set; the tree is with $\hit(\cT)$ levels, 

Also the tree is normal; that is: if $ t_1, t_2 \in \cT $ 
and $ \{ s: s \le _\cT t_1 \} = 
\{ s: s \le _\cT t_2 \}$  and 
this set has no $ < _\cT $-last element 
then $ t_1=t_2$. 
        
        \item[(e)]   $E$ is an equivalence relation on $u$, convex under $<_*$,   
        
        \item[(f)] 
        
        \begin{enumerate} 
            \item[$(\alpha)$]  each $E$-equivalence class is the set of $t \in \cT$ of level $\varp$ for some $\varp$, so the set of $E$-equivalence classes is naturally well ordered,    
              
            \item[$( \beta )$] we denote the $ \varepsilon$-th equivalence class  by $\cT_{[\varp]}$,  
                
            \item[$( \gamma ) $] $ E $ has  no last $E$-equivalence class if not said otherwise,
            
            \item[$( \delta ) $] let $ \lev_ {\mathscr T } (s  ) = \lev (s, {\mathscr T})$ be $ \varepsilon $ when $ s \in {\mathscr T } _{[\varepsilon ]}$,  equivalently $ \{t \colon t < _ {\mathscr T} s  \} $ has order type $ \varepsilon  $ under the order $<_{\mathscr T},$
              
            \item[$(\varepsilon)$] so $ \hit ({\mathscr T } )$ is $ \bigcup \{\lev_{\cT} ( s ) + 1 \colon s \in {\mathscr T }\}$ and it is a limit ordinal if not said otherwise.  
        \end{enumerate} 
        
        \item[(g)]  if $s \in u,\lev_{\cT}(s) < \zeta < \hit(\cT)$ then there is $t \in \cT_{[\zeta]}$ which is $<_{\cT}$-above $s$, 
        
        \item[(h)]  each $s \in \cT$ has exactly two immediate successors by $<_{\cT}$, 
        
        \item[(i)]  for $s \in \cT$ we let: 
        
        \begin{enumerate}
        \item[$\bullet_1$]   $\cT_{\ge s} = \{t \in \cT:s \le_{\cT} t\}$, 
        
        \item[$\bullet_2$]   $\suc_{\cT}(s) = \{t:t \in \cT_{[\lev(s)+1]}$ satisfies $s <_{\cT} t\}$, 
        \end{enumerate}     
        
        \item[(j)] let $s = t \rest \varepsilon $ mean that $ \lev_{\mathscr T }  (s ) = \varepsilon \le \lev _ {\mathscr T }(t) \wedge (s \le _ {\mathscr T } t ) $, 
        
        \item[(k)]  for $t_1,t_2 \in \cT,t_1 \cap_{\cT} t_2$ is the maximal common lower bound of $t_1,t_2$ (under $\leq_{\cT}$) so we demand it always exists, i.e. $(\cT,<)$ is normal, 
        
        \item[(l)]  for $ {\ell} = 0,1 $ we have  $R_\ell \subseteq \{(s,t) \colon s \in \cT$ and
        $s <_{\cT} t\}$ ,
        and if $ s <_ {\mathscr T } s_1 <_{\mathscr T } s_2$, 
        then $ s R _ {\ell} s_1  $  iff
        $ s R _ {\ell} s_2  $,
        
        \item[(m)] 
        if $s \in  \cT$ then for some 
        $t_0 \ne t_1$ we have $\suc_{\cT}(s) = \{t_0,t_1\}$ and ${\ell} < 2 \Rightarrow  (\forall t)(s R_\ell t$ \Iff \, $t_\ell \le_{\cT} t)$; so $s R_\ell t$ is the 
         analog 
        to $\eta \char 94 \langle \ell \rangle \trianglelefteq \nu$; we may think of  $ \{t\colon s R_ {\ell} t  \}  $ as a division to the left side and the right side of the set of the $t'$s above $ s $.
    \end{enumerate}

    1A) We define $ \mathbf{T} _{\rm wk }$ similarly (``wk'' stands for ``weak''), but we omit clauses (h), (m), replacing them with: 

    \begin{enumerate} 
        \item[(h)'] is $s \in \cT$ and $\lev_{\cT}(s) + \omega \leq \mathrm{ht}(\cT)$, \underline{then} there is $t \in \mathrm{spl}(\cT)$ such that $s \leq_{\cT} t$, where $\mathrm{spl}(\cT) = \{ t \in \cT \colon t \text{ has two immediate successors} \}$, and
    
        \item[(m)']  for $ s \in {\mathscr T } $  either
        $ \suc_{\mathscr T } (s)$  may be  as in part (1) or is empty or a singleton.
    \end{enumerate} 

    1B) For $\cT \in \bfT, <_{\rm{lex}} \, \coloneqq  \, <_{\cT}^{\rm{lex}}$ is the lexicographic order, i.e., 
    $$
    \eta <_{\lex} \nu \ \text{\underline{iff}} \ (\exists \rho)(\rho \, R_{0} \, \nu \wedge \rho \, R_{1} \, \eta) \text{ or } (\eta <_{\cT} \nu \wedge \eta R_{1} \nu) \text{ or } (\nu <_{\cT} \eta \wedge \nu R_{0} \eta).
    $$    
    
    2) Let $\bfT _{\theta,\kappa} = \{\cT \in \bfT \colon$   the tree $\cT$ has $\delta$ levels, for some ordinal $\delta$  of cofinality $\kappa$ and for every $\varp < \delta $ we have $ \theta > |\{s \in \cT \colon s$ of level $\le \varp\}|\}$. 

    3)  Let  $\cT_1 \subseteq_{\fl}  \cT_2$ mean:  
    \begin{enumerate}
        \item[(a)]  $\cT_1,\cT_2 \in\bfT $,  
        
        \item[(b)]  $<_{\cT_1} \, \coloneqq \, <_{\cT_2} \rest u_{\cT_1}$, 
        
        \item[(c)]  if $ {\mathscr T }_1 \models \lqq  \eta \cap \nu = \rho"$ then $ {\mathscr T}_2 \models \text{`}\text{`}  \eta \cap \nu = \rho "$, 
        
        \item[(d)]  $R_{\cT_1,\ell} = R_{\cT_2,\ell} \rest u_{\cT_1}$ for $\ell=0,1$,
        
        \item[(e)] $<^*_{\cT_1} \, \coloneqq \,  <^*_{\cT_2} \rest u_{\cT_1}$; 
        
        \item[(f)] $E_{{\mathscr T } _1} \coloneqq  E_ {{\mathscr T }_2}\rest u_{{\mathscr T } _1}$,  
    \end{enumerate}

    3A) Let $\cT_{1} \subseteq_{\mathrm{wk}} \cT_{2}$ is defined similarly omitting clause (f). 
    
    4) For $s \in  {\mathscr T }$ and $\ell \in \{0,1 \} $,   let $\suc_{\cT,\ell}(s)$ be the unique immediate successor of $s$ in $\cT$ such that $(s,t) \in R^{\cT}_\ell$. 
    
    5) We say $\cT_1,\cT_2 \in \bfT$ are \emph{neighbors} \when \, they are equal except that for each $t \in \cT_{1}$ we can change the order  $<_{\cT_{1}}^{\ast} \, {\rest} \, (t / E_{\cT_{1}})$ to $<_{\cT_{2}}^{\ast} (t / E_{\cT_{2}})$. 
\end{definition}

\begin{definition}\label{a31}
    1) We say $f$ is a $\subseteq$-embedding of $\cT_1 \in \bfT$ into $\cT_2 \in \bfT$ \when: $f$ is an isomorphism from  $ {\mathscr T } _ 1 $ onto ${\mathscr T } _1 ' $ where $ {\mathscr T } _1 ' \subseteq {\mathscr T } _2 $. 

    1A) We say that $f$ is a \emph{semi embedding} of $\cT_{1} \in \bfT$ into $\cT_{2} \in \bfT$, \underline{when} $f$ is a $\subseteq$-embedding of $\cT_{1}$ into some neighbour $\cT_{2}'$ of $\cT_{2}$.

    2) For any ordinal $\alpha$ (limit, if not said otherwise) and sequence $\bar < = \langle <_\beta\colon\beta < \alpha\rangle, $ with $ <_\beta$ a well ordering of ${}^\beta 2$ we define $\cT = \cT_{\alpha,\bar <}$ as follows (omitting $ \bar{ < }$ means \lqq for some"):  
    
    \begin{enumerate}
        \item[(a)]  universe ${}^{\alpha >}2$, 
        
        \item[(b)]  $<_{\cT}$ is $\triangleleft \rest {}^{\alpha >}2$, 
        
        \item[(c)]  $E_{\cT} \coloneqq \{(\eta,\nu) \colon \eta,\nu \in {}^{\beta}2$ for some $\beta < \alpha\}$,  
        
        \item[(d)]  $<^*_{\cT} \coloneqq \{(\eta,\nu) \colon \eta,\nu \in {}^{\alpha >}2$ and $\ell g(\eta) < \ell g(\nu)$ or $(\exists \beta < \alpha)(\ell g(\eta) = \beta = \ell g(\nu) \wedge \eta <_\beta \nu)\}$, 
        
        \item[(e)]  $ R_{\ell} \coloneqq \{ (\eta, \nu ) \colon \eta {\char 94} \langle {\ell} \rangle \trianglelefteq \nu \in {\mathscr T }   \} $,
        	
        \item[(f)] 	$\eta \cap_ {\mathscr T }  \nu \coloneqq \eta \cap \nu $. 
 
    \end{enumerate}
    
    3) For $\cT \in \bfT$ and $\zeta < \rm{ht}(\cT),$ let $<_{\cT, \zeta}$ be $<_{\cT}^{\ast} \rest \cT_{[\zeta]}.$ 

     4)  For any $ \cT \in \mathbf{T} $, let

     \begin{enumerate} 
        \item[(a)] $ { \rm sub}_1( {\mathscr T }) \coloneqq \{ {\mathscr U } \subseteq {\mathscr T } : \cU \not= 
       \emptyset$  and  for every $ s_1 \in {\mathscr U }, s_2 \in {\mathscr T } $   there is  $ t \in {\mathscr U } $ such that $s_1 < _ {\mathscr T } t \wedge \lev _{\mathscr T}(s_2)  \le \lev _ {\mathscr T } (t ) \}$, 

\item[(b)] $  \sub_2(\cT)$ is the set of $ {\mathscr U } \in \sub_1( {\mathscr T } )$  such that: if $ s_1, s_2 \in {\mathscr U } ,
\lev_{\cT}(s_1 ) = \lev_{\cT}(s_2 )$  and 
$ \vert \suc_ {\mathscr T } (s_1) \cap {\mathscr U } \vert =
  2 = \vert \suc_{\mathscr T } (s_2) \cap \cU \vert$ 
    \underline{then} $ s_1= s_2$ . 
\end{enumerate} 

    5) We say $ {\mathscr U } \subseteq {\mathscr T } $  is
    complete (in $ {\mathscr T })$  \underline{when}  
    if $t \in {\mathscr T }_{[\delta ]}, \delta < \mathrm{ht}(\cT)$ is 
    a limit ordinal and 
    $ \{ s : s < t \} \subseteq {\mathscr U }$,  then $ t \in {\mathscr U } $. We say that $\cT$ is complete when $u_{\cT}$ is complete. 
\end{definition}

\begin{claim}\label{a34}
    1) 
    If $\theta = \sup\{(2^{|\alpha|})^+:\alpha < \kappa\}$ and $\bar < = \langle <_\beta:\beta
    < \kappa\rangle$ as in \ref{a31}(2) above, \then\,  
    $\cT_{\kappa,\bar <}$ is well defined and 
    belongs to $\bfT _{\theta,\kappa}.$  

    2) 
    If $ \kappa = \kappa ^{< \kappa }, {\mathscr T } = {\mathscr T } _{\kappa, \bar{<}}$  are as in part (1), 
    \underline{then} there is $ {\mathscr U } \in \sub_{2}(\cT) $
     which is complete in $  {\mathscr T }$  and for every 
     $ s \in {\mathscr U }  $ there exists $ t \in {\mathscr T } $ 
     such that $ s < _ {\mathscr T } t, \lev_ {\mathscr T } (t)
     < \lev _ {\mathscr T } (s) + 2^{\lev_{\mathscr T } (s)}$  
     and $ | \suc _ {\mathscr T } (t) \cap {\mathscr U } | = 2$.  

     It follows that $  {\mathscr U } $  and ${\mathscr T } $ have the same number of $ \kappa$-branches. 
     Note however that,  
     $ {\mathscr T } \upharpoonright \cU \notin  \mathbf{T}$ because clause (m) of \ref{a28}(1) may fail still we may demand $\cT \rest \cU \in \bfT_{\mathrm{wk}}$ (see \ref{a28}(1A)).

     3) 
     For every $ {\mathscr T } _1 \in \mathbf{T} $   
     such that $ \delta =  \hit( {\mathscr T }{_1})$  satisfying $\alpha < \delta   \Rightarrow  \alpha \alpha < \delta$,  there are $ {\mathscr T}_{\mathrm{wk}} \in \mathbf{T} $  
      and $ h $ such that 
      $ \hit( {\mathscr T } _1) = \hit ({\mathscr T } )  $ and 
      $  {\mathscr U } \in \sub_2( {\mathscr T } ) $ and 
      $ h $ is an $\subseteq_{\mathrm{wk}}$-embedding of $ {\mathscr T } _1 $ 
      into $ {\mathscr T } $ with range $ {\mathscr U } $  
      and $ | {\mathscr U } |= | {\mathscr T } _1| =
      | {\mathscr T } | $.
      
\end{claim}

\begin{PROOF}{\ref{a34}}
    It is clear.
\end{PROOF}

\begin{definition}\label{a37}
    1) For $\cT \in \bfT$ let $\eseq_{n} (  \cT ) $ be the set of sequences  $ \bar{ a } $ such that:

    \begin{enumerate} 
        \item[(a)] $\bar a$ is an $<^*_{\cT}$-increasing sequence of length $ n $  of members of $\cT,$
        
        \item[(b)] $k < \ell < n \Rightarrow a_k \cap a_\ell \in \{a_m:m < n\}$; 
       (in fact, $\lqq\in \{ 
       a_m: m \le k \}"). $  
          
        \item[(c)] $ k,{\ell} < n \wedge \lev ( a_k) \le \lev (a_{\ell})  \Rightarrow a_ {\ell} \rest \lev(a_ k )\in \{a_m\colon m < n  \} $,  
    \end{enumerate} 

    1A) For $ {\mathscr U } \subseteq {\mathscr T }$ we let  $\eseq _n(\cU,\cT)$ be $\eseq _n(\cT) \cap ({}^n \cU)$, similarly for part (2).
     
    2) Let $\eseq  (\cT) = \eseq _{< \omega }(\cT) = \cup\{\eseq_n(\cT)\colon n < \omega\}$. 
     
    2A)  For finite $ A \subseteq {\mathscr T } $  we define the sequence $\bar{ b } = \cl(A) = \cl_{\mathscr T } (A) = \cl(A, {\mathscr T})$ as the unique $ \bar{b}$  such that:
    
    \begin{enumerate}
        \item[(a)] $ \bar{ b } \in \eseq ({\mathscr T })$,
        
        \item[(b)]  $ A \subseteq\Rang(\bar{b}),$  
        
        \item[(c)]  $ \Rang ( \bar{ b }) $ is minimal under those restrictions.
    \end{enumerate} 

    Also let  $\pos(A) = \pos(A, {\mathscr T })$ 
    be the unique function $ h $ from $ A $ into $\lg( \bar{ b })$ such that for every $a \in A$ we have: $i = h(a)$  \Iff \, $ b_i = a$. 

    2B) We may replace above $ A $ by a finite sequence $ \bar{a} $, and let  $c\ell_{\mathscr T }( \bar{a})$ be $c\ell_{\cT}(\rang(\bar{a}))$ and $\pos(\bar{a})$ be the function mapping $\ell < \lg(\bar{a})$ to $k$ iff ${a}_{\ell} = b_{k}$.     

    3) We say $\bar a, \bar b \in \eseq(\cT)$ are  $\cT$-similar or $\bar a \sim_{\cT} \bar b$ \when \, for some $n$ we have:
    
    \begin{enumerate}
        \item[(a)]  $\bar a,\bar b \in \eseq_n(\cT)$, 
        
        \item[(b)]  for any $k,i, 
        m < n$ we have (notice that, $<_{\cT}^{\ast}$ is not mentioned):
        
        \begin{enumerate}   
            \item[$\bullet_1$]  $a_k \le_{\cT} a_i$ \Iff \, $b_k \le_{\cT} b_i,$    
            
            \item[$\bullet_2$] $(a_k,a_i) \in R^{\cT}_{\ell} $ \Iff \, $(b_k,b_i) \in
            R^{\cT}_{\ell} $ for ${\ell}  = 0,1$, 
            
            \item[$\bullet_3$]  $a_k \cap_{\cT} a_\ell = a_m$ \Iff \, $b_k \cap_{\cT} b_\ell = b_m$, actually follows, 
            
            \item[$\bullet_4$] $ a_ k = a_ {\ell} \upharpoonright \lev(a_m)$ \underline{iff}  $ b_k = b_{\ell} \rest \lev(b_m)$,
             
            \item[$\bullet_5$] $(a_ k \cap a_m) R_{{\mathscr {\mathscr T}}, {\ell} } a_i$  \underline{iff} $(b_ k \cap b_ m ) R_{ {\mathscr T}, {\ell} } b_i$  for $ {\ell} = 0,1 $; actually follows, 
            
            \item[$\bullet_6$]  $\lev_{\cT}(a_k) \le \lev_{\cT}(a_\ell)$ \Iff \, $\lev_{\cT}(b_k) \le \lev_ {\mathscr T } (b_\ell);$ actually follows, 
            
            \item[$\bullet_7$] $a_k <^*_{\cT} a_{i}$ iff $b_k <^*_{\cT} b_{i}$. 
        \end{enumerate} 
    \end{enumerate}
    
    3A) We say that $\bar{a}, \bar{b} \in {}^{n}\cT$ are \emph{$\cT$-similar} when $\bar{a}' = \cl(\bar{a}), \bar{b}' = \cl(\bar{b})$ are $\cT$-similar and $a_{\ell}' = a_{k} \Leftrightarrow b_{\ell}' = b_{k}$ for any $\ell < \lg(\bar{a}'), k < n.$ 
        	
    4) For $ \bar{ a }  \in {}^{ n } {\mathscr T }$, let $\Lev( \bar{ a } ) $ be the set  $ \{ \lev _{\mathscr T} (a_{\ell} )\colon \ell < n  \} $.  

    5)  We say that $ {{\mathscr T }} \in \mathbf{T} _\exiota $ is weakly $ {\aleph_0} $-saturated \when:

    \begin{enumerate} 
        \item[$(*)$]   for every  $ \varepsilon <   \hit({\mathscr T })$ and  $ s_0, \dots , s_{n-1}$  from $ {\mathscr T } _{[\varepsilon ]},$ there are  $ \zeta \in (\varepsilon, \, \hit({\mathscr T }))$ and   $ t_0 <  ^*_ {\mathscr T } \dots <  ^*_{{\mathscr T } }t_{n-1}$  from $ {\mathscr T } _{[\zeta ]}$ satisfying $ k < n \Rightarrow s_ k < _ {\mathscr T } t_ k $,  
    \end{enumerate} 

    6) For $ {\mathscr T } \in \mathbf{T} _ \exiota $  let:
    
    \begin{enumerate} 
        \item[(a)] $ \icr_n ( {\mathscr T } ) $  be the set of  $ < ^*_ {\mathscr T } $-increasing $ \bar{ a } \in {}^{ n } {\mathscr T } $ and let ${}^{ n } {\mathscr T } $ be the set of sequences of length $ n $ from $ {\mathscr T } $, 
            
        \item[(b)] $ \icr( {\mathscr T } ) = \cup \{\icr_n ({\mathscr T})\colon n < \omega  \} $ and $ \seqq ({\mathscr T } ) = \cup \{ {}^{n} \cT \colon n < \omega  \} $. 
    \end{enumerate} 

    7) For $ {\mathscr T } \in \mathbf{T}_\exiota $:
    
    \begin{enumerate} 
        \item[(a)] for $ \bar{ t } \in \icr({\mathscr T }) $ or just $\seq(\cT)$, let $ \stp (\bar{ t }, {\mathscr T } )  $ be the pair (the similarity type of  $ \cl(\bar{ t } , {\mathscr T }), \pos ( \bar{t}, {\mathscr T })$),  that is all the information from part (3) (of \ref{a37}) and $ \pos$, 
            
        \item[(b)] if in addition, $ {\mathscr U } \subseteq {\mathscr T}$ then we let $ \stp(\bar{ t } , {\mathscr U }, {\mathscr T } )$ be the function mapping $ \bar{ s } \in {}^{ \omega > } {\mathscr U }$ to $ \stp( \bar{ t }  {\char 94} \bar{ s}, {\mathscr T})$.
    \end{enumerate}     

    8) Let $ \mathbb{S}^n$  be the set of similarity types of sequences of length $ n $  in some  $ {\mathscr T } \in \mathbf{T}$,  so the sequences are not necessarily increasing.  
    
    9) Naturally $ \mathbb{S} = \cup \{\mathbb{S}^n\colon n < \omega  \}.$ 

    10) For $ {\mathscr T } \in \mathbf{T} $ and $ n < \omega $ we define
    $ {\rm fseq}_n( {\mathscr T } ) $  as the set of sequences
    $ \bar{ s } = \langle  s_ \eta : \eta \in 
    \cup \{ {}^{ m  }2 : m \le n  \}    \rangle $  
    such that  for some ordinals $ \alpha (0) < \dots 
       \alpha ( n ) $ we hav: 
   \begin{enumerate} 
\item[$ \bullet _1 $] $ s_ \eta \in {\mathscr T } $  is of level $ \alpha (  \lg ( \eta ))$, 
\item[$ \bullet _2 $] $ s_ \eta  < _ {\mathscr T }  s_ \nu $
  when   $ \eta $  is a proper  initial segment of $ \nu $, 
\item[$ \bullet _3 $]   for $ \iota = 0,1 $  above 
  $ s_ \eta R^ {\mathscr T } _ \iota s_ \nu $   iff 
     $ \nu ( \lg (\eta ))  \iota $.
   \end{enumerate} 

   11) The similarity type of $ \bar{ s } \in {\rm fseq}({\mathscr T } ) $
      is the following linear order on 
        $  \cup \{ {}^{ m  }2 : m \le n  \}$: 

        $ \eta < _{\bar{ s }}   \nu $  iff 
        $ \lg( \eta )    < \lg (\nu )$  or 
    $ \lg( \eta )    < \lg (\nu )$  and $ \eta < *_{\mathscr T } \nu $.

So th enumber of similarity types on members of
 $ {\rm fseq}_n ({\mathscr T } )$
   is $ \Pi _{m \le n } (2^m !) $, and we can express our partition relations using
   $ {\rm fseq}_n ({\mathscr T } )$  
   
   instead 
     $ {\rm eseq}_n ({\mathscr T } )$.
\end{definition}

Now comes the main property.

\begin{definition}\label{a40}
    1) For $\cT_1,\cT_2 \in \bfT_{\fl}$ and $n < \omega $ and a cardinal $\sigma$ let $\cT_2 \rightarrow_{\fl} (\cT_1)^n_\sigma$ mean:
    
    \begin{enumerate}
        \item[$(*)$]  if $\bfc \colon \eseq_n(\cT_2) \rightarrow \sigma$, \then \, there is a $\subseteq_{\fl}$-embedding $g$ of $\cT _1$ into $\cT_2$ such that the colouring $\bfc \circ g$ is homogeneous for $\cT_{1},$  which means:
        
        \begin{itemize}
            \item  if $\bar a,\bar b \in \eseq_n(\cT_1)$ are $\cT_2$-similar, then $\bfc(g(\bar a)) = \bfc(g(\bar b))$.
        \end{itemize}
    \end{enumerate}

    2) For $\cT_1,\cT_2 \in \bfT, \, k < \omega$ and $\sigma,$ let $\cT_2 \rightarrow_{\fl} (\cT_1)^{\eend(k)}_\sigma$ mean that:
    
    \begin{enumerate}
        \item[$(*)$]   if $\bfc \colon \eseq (\cT_{2}) \rightarrow \sigma$ \then  \,  there is an $\subseteq_{\fl}$-embedding $g$ of $\cT_1$ into $\cT_2$ such that the colouring $\bfc' = \bfc \circ g$ (see below) satisfies $\bfc'(\bar\eta)$ does not depend  on the last $ k $ levels, that is:  
          
        \begin{enumerate} 
            \item[$\bullet_1$]  the meaning of  $\bfc' = \bfc \circ g$ is that for every $\bar{s} \in \eseq({\mathscr T } _1) $ we have $\mathbf{c} '(\bar{s}) = \mathbf{c} (\langle g(\bar{s}) \rangle)$,
             	
            \item[$\bullet_2$]  if $n < \omega$ and $\bar{a}, \bar{ b } \in \eseq_n (  {\mathscr T } _2 )$ are $ {\mathscr T }_2$-similar
            and  
            $$ {\ell}  < n \wedge (k \le |\Lev( \bar{ a } ) \setminus  \lev (a_{\ell}  )|) \Rightarrow b_ {\ell} = a_ {\ell},
            $$ \underline{then} $\mathbf{c}' ( \bar{ a } ) = \mathbf{c}' (\bar{ b } )$. 
        \end{enumerate} 
    \end{enumerate}

    3) Let $\cT_{2} \to_{\fl} (\cT_{1})_{\sigma}^{\End(k, m)}$ 
    be 
    defined as in part (2), but we restrict in $\bullet_{2}$ demanding that $n \leq m$,  
    so the length of the relevant sequence $ \bar{ a } \in 
    \eseq( {\mathscr T } _2) $ is bounded. 

    4) We define $ {\mathscr T } _1 \rightarrow_{\fl}'  ({\mathscr T } _1)^n_ \sigma $   as in part (1), but $\bfc \colon {}^{n}(\cT_{2}) \to \sigma$ and  $\bar{ a }, \bar{ b } \in {}^{n}(\cT_{2})$. 
    
    5) We define similarly $\cT_{1} \to_{\fl} (\cT_{2})^{\leq n}_ \sigma $ 
    and $\cT_{1} \to_{\fl}' (\cT_{2})^{\leq n} _ \sigma .$ 

    6) We may replace $\bfT_{\fl}$, $\subseteq_{\fl}$, $\to_{\fl}$ by $\bfT_{\mathrm{wk}}$, $\subseteq_{\mathrm{wk}}, \to_{\mathrm{wk}}$ in parts (1)-(5) of Definition \ref{a40}. 
\end{definition}

\begin{remark}\label{g41}
    1) We may  mention some implications among the $ \rightarrow,$
    
    2) Of course, the equality $\bfc(g(\bar{a})) = \bfc(g(\bar{b}))$ is required only if $\bar{a}$ and $\bar{b}$ are $\cT_{2}$-similar since this is the best possible homogeneity, as one can define a coloring according to similarity types. 
\end{remark}

\begin{claim}\label{g43}
    Let $\cT \in \bfT.$  
    
    1) If $A \subseteq \cT$ is finite non-empty with $m$ elements \then:
    
    \begin{enumerate}
        \item[(a)]  For some $ n \le (2m-1)m^{2} $ and $ \bar{ a } \in \eseq_n ({\mathscr T } )$ we have $ A \subseteq \Rang(\bar{ a } )$; moreover $ \max \{\lev_{\mathscr T } (a)\colon a \in A  \} = \max \{\lev_{\mathscr T } (a_{\ell} )\colon{\ell} < n  \};$ in fact $\bar{a} = \cl_{\cT}(A),$
        
        \item[(b)]  If  $ {\mathscr T }  \in \mathbf{T} _ \exiota $ and $ A \subseteq {\mathscr T }$ is finite, then $\cl(A, {\mathscr T}), \pos(A, {\mathscr T})$ are well defined. 
    \end{enumerate}

    2) The number of quantifier free complete $n$-types realized in some $\cT \in \bfT_\exiota $ by some $\bar a \in \eseq_n(\cT)$ is, e.g. $\le 2^{2n^2+n}$ but $\ge n$.

    3)  If $\cT \in \bfT$ is weakly $\aleph_0$-saturated \then \, $\cT$ realizes all possible such types, i.e. each type is realized in some $\cT' \in \bfT;$ here ``$\rm{ht}(\cT)$ is a limit ordinal'' follows. 

    4) Assume ${\mathscr T } \in \mathbf{T}_{\mathrm{wk}}$ and $ {\mathscr U } $  is a subset of $ {\mathscr T } $ closed under $ < _ {\mathscr T } $, (that is $ s <_ {\mathscr T } t \in {\mathscr U } \Rightarrow  s \in {\mathscr U } $).  Let $\lev(  {\mathscr U }, {\mathscr T })  =  \sup  \{  \lev(s,  {\mathscr T }) + 1 \colon s \in {\mathscr U} \}$.

    If $\lev ({\mathscr U }, {\mathscr T } ) \leq \lev (t, {\mathscr T } ) $  and $ A \subseteq {\mathscr U } $  is finite  \then \, $ \bar{ b } = \cl(A \cup \{ t \} ) $ has the form $  \bar{ c } {\char 94} \langle t \rangle $ with $\bar{ c } \in \eseq({\mathscr T }) \cap {}^{ \omega > } {\mathscr U }.$

    5) 
     If $ n < \omega , {\mathscr T } , {\mathscr U } $  
    are as in \ref{a34}(2) \underline{then}  the number  
    $ k^*_n$
    of quantifier free complete $ n $-types realized 
    in $ {\mathscr T } $
             by sequences $  \bar{ a } \in \mathbf{I} _n $ satisfies the following $ k^*_{0} = 1= k^*_1$  and $ k^*_{n + 1} = n k^*_n (n!) $  for $ n \ge 1 $, where

             $ \mathbf{I} _n = \mathbf{I} _n ({\mathscr T }) =
             \{  \bar{ a } \in {}^{ n }{\mathscr U } \colon \bar{a} \text{ is without repetitions and } 
             \langle \lev(a_i): i < \lg ( \bar{ a })  \rangle 
             \text{ is constant}\}. $

    6) If $\cT \in \bfT_{\mathrm{wk}}$, $\cT \in \sub_{2}(\cT)$ and $\bar{a} \in \eseq_{n}(\cT)$, \underline{then} for every $\varp < \mathrm{ht}(\cT)$ for at most one $\ell < \lg(\bar{a})$ we have $\bigwedge_{ \ell = 0}^{1} (\exists w)(a R_{\ell}^{\cT} a_{\ell})$ and\redq{.....}. 
\end{claim}

\begin{PROOF}{\ref{g43}}
    Clearly, (3) and (4) hold, and we shall use them freely.
    
    1) Let $B_1 =\{\eta \cap_{\cT} \nu\colon\eta,\nu \in A\}$ and note that $\eta \in A \Rightarrow \eta = \eta \cap \eta \in B_1$.  Now by induction on $|A|$ easily $|B_1| \le 2m-1$. Let $B_{2} := \{ \eta \rest \lev_{\cT}(\nu)\colon\eta \in A, \nu \in B_{1}$ and $\lev_{\cT}(\eta) \geq \lev_{\cT}(\nu) \}.$ 
    
    Easily $B_{2} = \cl(A, \cT),$ also $\vert B_{2} \vert \leq m^{2} \vert B_{1} \vert = m^{2}(2m - 1).$ 
    

    
    
         
         
         
     
     
     
     
    We may improve the bound\footnote{
    in fact the exact bound is: 
    \begin{itemize}
        \item $\cl_{\cT}(A) = m + (m-1) + {m \choose 2} + {m-1 \choose 2}$ and it is obtained. 
    \end{itemize}

    [Why is this bound?  For any such $A$, define the sets $A[0] \coloneqq A$, $A[1] \coloneqq \{ \eta \cap \nu \colon \eta \neq \nu \in A \}$, $A[2] \coloneqq \{ \eta \rest \lg(\nu) \colon \eta, \nu \in A \text{ and } \lg(\nu) < \lg(\eta) \}$ and 
    $$
    A[3] \coloneqq \{ \eta \rest \lg(\nu \cap \rho) \colon \eta, \nu \in A \text{ and } \eta \ntriangleleft \nu, \eta \ntriangleleft \rho \text{ and } \nu \neq \rho \}.
    $$
    Early $\cl_{\cT}(A) = A[0] \cup A[1] \cup A[2] \cup A[3]$; disjoint and we prove the inequality bu induction on $m$. For $m = 1$, $\cl(A) = A$, so it is clear. If $\vert A \vert = m = n +1$, let $a_{0} <_{I}^{\ast} \dots <_{I}^{\ast} a_{m}$ list $A$ and let $B \coloneqq A \setminus \{ a_{m} \}.$ Easily, $B[i] \subseteq A[i]$ for $i < 4$ and: 
    \begin{itemize}
        \item $A[0] \setminus B[0]$ has at most one element, 

        \item $A[1] \setminus B[1]$ has at most one element, 

        \item $A[2] \setminus B[2]$ has at most $m$ elements, 

        \item $A[3] \setminus B[3]$ has at most $m - 1$ elements.
    \end{itemize}
    Together we are done.]} but this does not matter here; similarly below. 
            
     
    2) Considering the class of such pairs $(\bar a,\cT)$, (fixing $n$); the number of possible $E_{\bar a} = \{(k,i):a_k E_{\cT} a_i\}$ is $\le
    2^{n^2}$ and the number of $<_{\bar a} = \{(k,i):a_k <_{\cT} a_i\}$ is $\le 2^{n^2}$ and the number of $\{(a_k,a_i):(a_k,a_i) \in R_1^{\cT}$ and for no  $j,a_k <_{\cT}
    a_j <_{\cT} a_{i}\}$ is $\le 2^n$.
    
    Lastly, from those we can compute $\{(a_k,a_i):(a_k,a_i) \in
    R^{\cT}_0\}$ as $\{(a_k,a_i)\colon(a_k \cap_{\cT} a_i = a_k) \wedge ((a_k, a_i) \notin R_{1}^{\cT}) \wedge a_{k} \neq \ a_{i} \}$, so together the number is $\le 2^{2n^2+n}$.
    
    Clearly, we can get a better
    bound, e.g. letting $m^\bullet_n(\cT) = |\{\tp_{\qf}(\bar a \rest
    n,\emptyset,\cT):\bar a \in \eseq(\cT)$ has length $\ge n\}|$ then:
    
    \begin{enumerate}
        \item[$(\ast)_{1}$]
        
        \begin{itemize}
            \item[$\bullet_{1}$]  $m^\bullet_n(\cT) = 1$ for $n=0,1,$
        
            \item[$\bullet_{2}$]   $m^\bullet_{n+1}(\cT) \le 4n(m^\bullet _n(\cT)),$
        
            \item[$\bullet_{3}$]   hence $m^\bullet _n(\cT) \le 4^{n-1}(n-1)!.$
        \end{itemize}
    \end{enumerate}
    
    [Why? e.g. for $\bullet_{2}$ notice that $\tp_{\qf}(\bar{a} \rest (n+1), \emptyset, \cT)$ is determined by $q = \tp_{\qf}(\bar{a} \rest n, \emptyset, \cT)$ and the unique triple $(m, \iota, \ell) \in n \times 2 \times 2$ such that:
    
    \begin{enumerate}
        \item[$(\ast)_{1.1}$] 
        
        \begin{enumerate}
            \item[(a)] $m < n$ is such that $\lev(a_{m} \cap a_{n})$ is maximal, hence $a_{m} <_{\cT} a_{n},$ 
            
            \item[(b)] $a_{m} R_{\iota} a_{n},$

            \item[(c)] $\ell= 0$ iff $\Lev_{\cT}(a_{n}) > \Lev_{\cT}(a_{n-1}).$
        \end{enumerate}
    \end{enumerate}
    
    As there are $\leq 4n$ possibilities, we are done.]
    
    It suffices to consider the case $\cT$ is weakly $\aleph_{0}$-saturated (see \ref{a37}(5), \ref{g43}(3)) and then we can get exact values.
    
    Now for $n \geq k \geq 1$ let, $$m_{n, k}^{\ast}(\cT) :=  \vert \{ \tp_{\qf}(\bar{a}, \emptyset, \cT)\colon\bar{a} \in \eseq_{n}(\cT) \ \text{such that} \ \vert \{ \ell\colon\lev(a_{\ell}) = \max( \Lev(\bar{a})) \} \vert = k \} \vert.$$
    
    So, 
    
    \begin{itemize}
        \item $m_{1, 1}^{\ast}(\cT) = 1, \, m_{1, 0}^{\ast}(\cT) = 0$ and stipulate $m_{0, k}^{\ast}(\cT) = 0,$
        
        \item if $n = k \geq 1,$ then $m_{n, k}^{\ast}(\cT) = 1,$
        
        \item if $2k - 1 > n \geq k \geq 1,$ then $m_{n, k}^{\ast}(\cT) = 0,$
        
        \item if $n \geq 1,$ then $m_{n+1, 1}^{\ast}(\cT) = \Sigma \{ 2k \cdot m_{n, k}^{\ast}(\cT)\colon k \in [1, n] \},$
    \end{itemize}
    
    and more generally, 
    
    \begin{itemize}
        \item if $n > k \geq 1,$ then $$m_{n+k, k}^{\ast} = \sum \biggl \{ \ell! \cdot  {\ell \choose \ell_{1}} \cdot {\ell - \ell_{1} \choose \ell_{2}} \cdot 2^{\ell} \cdot m_{n, \ell}^{\ast}(\cT)\colon\ell, \ell_{0}, \ell_{1}, \ell_{2} \in [0, n), \, \ell = \ell_{0} + \ell_{1} + \ell_{2} \biggr \}.$$ 
    \end{itemize}
    
    [Why? Considering $p = \tp_{\qf}(\bar{a} \rest (n + k), \emptyset, \cT)$ we fix $q = \tp(\bar{a} \rest n, \emptyset, \cT),$ let $\ell$ be maximal such that $ n - \ell \leq i < n \Rightarrow \lev(a_{i}) = \lev(a_{n - 1})$ (equivalently $\lev(a_{n - \ell})) = \lev(a_{n - 1})).$ For $\iota = 0, 1, 2,$ let $S_{\iota} = \{m \colon n - \ell \leq m < n$ and $\iota = \vert \{ j < k\colon a_{m} <_{\cT} a_{j} \} \vert\},$ so $(S_{0}, S_{1}, S_{2})$ is a partition of $[n - \ell, n).$ Let $S_{1}^{\bullet} = \{ m \in S_{1} \colon $ if $j < k$ then $a_{m} R_{1} a_{s}\}.$ Fixing $\ell$ the number of possibles $q$'s is $m_{n, \ell}^{\ast}(\cT)$ and fixing $q$ (and so $\ell$) the freedom left is choosing $\ell_{0}, \ell_{1}, \ell_{2} \geq 0$ such that $\ell_{1} + 2 \ell_{2} = k$ and then choosing the partition $(S_{0}, S_{1}, S_{2})$ which have ${\ell \choose \ell_{1}} {\ell - \ell_{2} \choose \ell_{2}}$ possibilities we have $2^{\ell_{1}}$ possible choices of $S_{1}^{\bullet}$ and lastly $k$ possible linear orders of $\{ a_{i}\colon i \in [n, n+k) \}$ clearly we are done.] 
    
    3), 4), 5)  Clear.   
\end{PROOF}

\begin{claim}\label{a74}
    Let\footnote{If $\sigma < \aleph_{0}$ we have parallel results depending on  decreasing $\sigma$ in the 
    conclusion 
    on the bounds from  \ref{a34},  
    that is, for part (2): if $\cT \to  (\cT)_{\sigma(1)}^{\End(1, m(1))},$ then $\cT \to (\cT)_{\sigma(2)}^{\End(n, m(2))}.$} 
    $\sigma \geq \aleph_{0}$ be a cardinal and $\cT \in \bfT$. Then: 

    1) If $\cT \to (\cT)_{\sigma}^{\rm{end}(1)}$ \underline{then} $\cT \to (\cT)_{\sigma}^{\rm{end}(k)}$ for every $k < \omega.$ 
    
    2) If $\cT \to (\cT)_{\sigma}^{\rm{end}(1)}$ \underline{then} $\cT \to (\cT)_{\sigma}^{n}$ for every $n < \omega.$ 
    
    3) If $k \geq 1$ and $\cT_{\ell} \in \bfT$ for $\ell = 0, \dots, k$ and $\cT_{\ell + 1} \to (\cT_{\ell})_{\sigma}^{\rm{end}(1, m)}$ for $\ell < k,$ \underline{then} $\cT_{k} \to (\cT_{0})_{\sigma}^{\rm{end}(k, m)},$ hence $\cT_{k} \to (\cT_{0})_{\sigma}^{\leq m}.$
    
    
    
    
    
\end{claim}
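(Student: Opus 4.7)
My plan is to prove the three parts by induction on $k$, with (1) providing the core argument. The base case $k=1$ of (1) is the hypothesis, so assume $\cT \to (\cT)^{\End(k)}_\sigma$ and let $\bfc \colon \eseq(\cT) \to \sigma$ be arbitrary; write $\bar c^{-j}$ for $\bar c$ with all its elements lying at the top $j$ levels of $\Lev(\bar c)$ removed. First apply the $\End(1)$ hypothesis to $\bfc$, obtaining a $\subseteq$-embedding $g_1 \colon \cT \to \cT$ so that $\bfc_1 \coloneqq \bfc \circ g_1$ is $\End(1)$-homogeneous; thus $\bfc_1(\bar a) = F_1(\stp(\bar a, \cT), \bar a^{-1})$ for some function $F_1$. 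Since by \ref{g43}(2) the number of similarity types of sequences of each fixed length is finite and $\sigma \geq \aleph_0$, the finite function $\tau \mapsto F_1(\tau, \bar c)$ can be encoded as a single value $\bfd(\bar c) \in \sigma$, yielding a coloring $\bfd \colon \eseq(\cT) \to \sigma$. Applying the inductive hypothesis to $\bfd$ produces $g_2 \colon \cT \to \cT$ with $\bfd \circ g_2$ depending only on $(\stp(\bar c,\cT), \bar c^{-k})$.

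Setting $g \coloneqq g_1 \circ g_2$, I claim $g$ witnesses $\cT \to (\cT)^{\End(k+1)}_\sigma$. Using that $\subseteq$-embeddings preserve the equivalence relation $E$ (so the top level of a sequence is mapped to the top level of its image) and similarity types, we have $g_2(\bar a)^{-1} = g_2(\bar a^{-1})$ and $\stp(g_2(\bar a)) = \stp(\bar a)$; therefore $\bfc(g(\bar a)) = F_1(\stp(\bar a), g_2(\bar a^{-1}))$, which is the $\stp(\bar a)$-component of $\bfd(g_2(\bar a^{-1}))$. By the choice of $g_2$, this depends only on $(\stp(\bar a^{-1}), \bar a^{-(k+1)})$, and $\stp(\bar a^{-1})$ is determined by $\stp(\bar a)$, completing the inductive step. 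Part (2) now follows: given $\bfc \colon \eseq_n(\cT) \to \sigma$, extend it arbitrarily to $\eseq(\cT)$ and apply the $\End(n)$ case just established; for any $\bar a \in \eseq_n(\cT)$ we have $|\Lev(\bar a)| \leq n$, so $\bar a^{-n}$ is empty, and $\End(n)$-homogeneity reduces to full similarity homogeneity, giving $\cT \to (\cT)^n_\sigma$.

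For part (3), I would run the same inductive scheme along the chain: in the step use $\cT_k \to (\cT_{k-1})^{\End(1,m)}_\sigma$ to obtain $g_1$ and $F_1$, encode $\bfd$ on $\eseq(\cT_{k-1})$ (setting it to a default value on sequences where $\End(1,m)$-constancy does not pin $F_1$ down), and apply the inductive hypothesis $\cT_{k-1} \to (\cT_0)^{\End(k-1,m)}_\sigma$ to $\bfd$. The key combinatorial identity is that the low-for-top-$k$ elements of $\bar a \in \cT_0$ coincide with the low-for-top-$(k-1)$ elements of $\bar a^{-1}$, so the $m$-bound on the count of low elements propagates through the truncation, yielding $\cT_k \to (\cT_0)^{\End(k,m)}_\sigma$. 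The ``hence'' then follows because any sequence of length at most $m$ automatically satisfies the $\leq m$ low-count restriction, and once the chain is long enough that the at most $m$ distinct levels of such a sequence all fall within the top $k$, $\bar a^{-k}$ is empty, collapsing $\End(k,m)$-homogeneity to full similarity homogeneity. The main technical obstacle is the compatibility of the encoding $\bfd$ with the level-preservation of $\subseteq$-embeddings --- in particular, verifying $g_2(\bar a)^{-1} = g_2(\bar a^{-1})$, which drives the entire reduction --- and in (3) the careful bookkeeping needed to keep the $m$-bound intact across the chain of applications.
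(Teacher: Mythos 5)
Your overall scheme is the natural way to fill in what the paper simply records as ``Clear'': apply the $\End(1)$ hypothesis to $\bfc$, view the resulting value as $F_1(\stp(\bar a,\cT),\bar a^{-1})$, code the induced function of the lower part as a new colouring $\bfd$, apply the inductive hypothesis to $\bfd$, and compose; and your verification that a $\subseteq$-embedding preserves similarity types and relative levels, hence $g_2(\bar a)^{-1}=g_2(\bar a^{-1})$, is correct, as is the reduction of part (2) to $\End(n)$. There is, however, one step whose justification as written does not suffice: you call $\tau\mapsto F_1(\tau,\bar c)$ a \emph{finite} function and code it by a single colour, citing only that by \ref{g43}(2) there are finitely many similarity types of each fixed length. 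But a priori a sequence $\bar a\in\eseq(\cT)$ with $\bar a^{-1}=\bar c$ could have arbitrarily many top-level entries, so the relevant types would range over unboundedly many lengths, the coded object would be a countably infinite $\sigma$-valued function, and the coding would need $\sigma^{\aleph_0}=\sigma$, which is not assumed. The gap is real but repairable: by clause (b) of Definition \ref{a37}(1) the pairwise meets of the top-level entries of $\bar a$ are themselves entries of $\bar a$ of strictly smaller level, and since the tree is binary (clause (h) of \ref{a28}) $p$ pairwise incomparable nodes of the same level produce exactly $p-1$ distinct pairwise meets; hence $\bar a$ has at most $|\bar c|+1$ top-level entries, so $\lg(\bar a)\le 2|\bar c|+1$, only finitely many types are relevant for each $\bar c$, and finite tuples from $\sigma\ge\aleph_0$ can indeed be coded by elements of $\sigma$. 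This observation should be stated explicitly, since it is exactly where the hypothesis $\sigma\ge\aleph_0$ (as opposed to a closure hypothesis on $\sigma$) earns its keep.

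For part (3) the bookkeeping needs one more sentence, and here your phrasing points at a reading of Definition \ref{a40}(3) under which the argument does not close. Your propagation identity (the entries of $\bar a$ below its top $k$ levels are exactly the entries of $\bar a^{-1}$ below its top $k-1$ levels) is correct, but if the parameter $m$ is taken, as your words ``the $m$-bound on the count of low elements'' suggest, to bound the number of \emph{fixed} entries, then the very first application of $\cT_k\to(\cT_{k-1})^{\End(1,m)}_\sigma$ only controls sequences with at most $m$ entries below the top level, and this is not implied by the $\End(k,m)$ restriction when many entries sit at the intermediate levels among the top $k$; the composition then fails at that step. The chain argument does go through if $m$ is read as bounding the number of entries lying in the top $k$ levels (which is the bound actually produced in \ref{f5}, where the lower part of the sequence is unrestricted), because the top-$1$ entries of $\bar a$ and the top-$(k-1)$ entries of $\bar a^{-1}$ are both among the top-$k$ entries of $\bar a$, and because $F_1(\tau,\bar c)$ is then pinned down exactly for the finitely many types $\tau$ you need. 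So state which reading you use, and check the first application of the $\End(1,m)$ hypothesis against it; with that fixed, your derivation of the ``hence'' clause is fine.
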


\begin{proof} 
    Clear. 
\end{proof}

\subsection{Forcing in $\ZFC$}\label{1B}

\begin{remark}\label{f2}
    Concerning the choice of $m(\ast)$ in $\ref{f5}$ below (given $\bfm$), it is minor from the author's point of view, i.e., its value is immaterial for  the model theoretic results. 
    
    Trivially $m(\ast) = 2m$ suffices. 
\end{remark} 

\begin{definition}\label{f3}
    Let $\cT \in \bfT$ and $\bar{s} \in \eseq(\cT)$. 

    (1) Let $\mathrm{last-lev}(\bar{s}) \coloneqq \{ \ell < \lg(\bar{s}) \colon$ if $k < \lg(\bar{s})$, then $\lev_{\cT}(s_{k}) \leq \lev_{\cT}(s_{\ell}) \}$ and let $\mathrm{last-ele}(\bar{s}) \coloneqq \{ s_{\ell} \colon \ell \in \mathrm{last-lev}(\bar{s}) \}$ and $\mathrm{init-lev}(\bar{s}) = \{ \ell < \lg(\bar{s}) \colon \ell \notin \rm{last-lev}(\bar{s}) \}$. 

    (2) Let $\mathrm{order}(\bar{s}, \cT) \coloneqq \{ \sqsubset \colon \sqsubset$ is a linear order on $\mathrm{last-ele}(\bar{s})\}$. 

    (3) Let $\mathrm{Eseq}(\cT)$ be the set of pairs $(\bar{s}, \sqsubset)$, where $\bar{s} \in \eseq(\cT)$ and $\sqsubset \in \mathrm{order}(\bar{s}, \cT)$. 

    (4) The similarity type of $(\bar{s}, \sqsubset) \in \mathrm{Eseq(\cT)}$ is naturally defined: ($(\bar{s}_{1}, \sqsubset_{1}), (\bar{s}_{2}, \sqsubset_{2}) \in \mathrm{Eseq}(\cT)$) have the same similarity type \underline{iff} $\mathrm{sim-tp}(\bar{s}_{1}, \cT_{1}) = \mathrm{sim-tp}(\bar{s}_{2}, \cT_{2}')$ \underline{when} for $\iota = 1, 2$, we let: 

    \begin{itemize}
        \item[$\bullet_{1}$] $\alpha_{\iota}$ be such that $\{ s_{\iota, \ell} \colon \ell \in \mathrm{last-ter}(\bar{s}_{\iota}, \cT_{\iota}) \} \subseteq \cT_{[\alpha_{\iota}]}$, 

        \item[$\bullet_{2}$] $\cT_{\iota}'$ be like $\cT_{\iota}$ except that we change $<_{\cT_{\iota}, \alpha_{\iota}}$ for every $\ell, k \in \mathrm{last-term}(\bar{s}_{1})$ we have $s_{\iota, \ell} <_{\cT_{\iota}'} s_{\iota, k} \Leftrightarrow \ell \sqsubset_{\iota} k$. 
    \end{itemize}

    (So it does not follows that $\bar{s}_{1}$, $\bar{s}_{2}$ have the same similarity type).

    (5) For $\cT_{1}$, $\cT_{2} \in \bfT$, we have $\cF_{2} \to^{1} (\cF_{1})_{\sigma}^{\mathrm{end}(1, m)}$ \underline{when} (A) $\Rightarrow$ (B), where: 

    \begin{enumerate}
        \item[(A)] $\bfc \colon \mathrm{Eseq}(\cT_{2}) \to \sigma$. 

        \item[(B)] There is a semi embedding $h$ from $\cT_{1}$ into $\cT_{2}$ (see Definition \ref{a31}(1A)) such that: 

        \begin{itemize}
            \item if $(\bar{s}_{1}, \sqsubset_{1})$, $(\bar{s}_{2}, \sqsubset_{2}) \in \mathrm{Eseq}_{\leq m}(\cT_{2})$, $\vert \mathrm{last-lev}(\bar{s}_{1}) \vert = m = \vert \mathrm{last-lev}(\bar{s}_{2}) \vert$ and $\bar{s}_{2} \rest \mathrm{init-lev}(\bar{s}_{1}) = \bar{s}_{2} \rest \mathrm{init-lev}(\bar{s}_{2})$, \underline{then} we have $\bfc(h(\bar{s}_{1}), \sqsubset_{1}) = \bfc(\bar{s}_{2}, \sqsubset_{2})$. 
        \end{itemize}
    \end{enumerate}

    (6) We define $\cF_{2} \to^{2} (\cF_{1})_{\sigma}^{\mathrm{end}(1, m)}$ similarly, but in (B) we demand $h$ to be an embedding.

    (7) We define $\cF_{2} \to^{\ell} (\cF_{1})_{\sigma}^{\mathrm{end}(k, m)}$ for $\ell = 1, 2$ and $k \in [1, \omega)$ similarly. 
\end{definition}

\begin{definition}\label{f3b}
    1) Fixing $\bar{m} = \langle m_{l} \colon l \leq l(\ast) \rangle$, $m_{\ell} \geq 1$ and $\cT \in \bfT$, let $\eseq_{\bar{m}}(\cT)$ is the set of $\bar{s} \in \eseq(\cT)$ such that: 

    \begin{enumerate}
        \item the set $\{ \lg(s_{i}) \colon i < \lg(\bar{s}) \}$ has $\ell(\ast) + 1$ members,

        \item let $\alpha_{0} < \dots < \alpha_{\ell(\ast)}$ list it,

        \item $\vert \{ \ell \colon \lg(s_{\ell}) = \alpha_{\ell} \} \vert = m_{\ell}$ for $\ell \leq \ell(\ast)$.
    \end{enumerate}

    2) For $\cT_{1}, \cT_{2} \in \bfT$, let $\cT_{2} \to^{\ell} (\cT_{1})_{\sigma}^{\mathrm{end}(1, \bar{m})}$, mean (A)$\Rightarrow$(B), where: 

     \begin{enumerate}
        \item[(A)] $\bfc \colon \mathrm{Eseq}_{\bar{m}}(\cT_{2}) \to \sigma$. 

        \item[(B)] There is a semi embedding $h$ if $\ell = 1$ and an embedding if $\ell = 2$ from $\cT_{1}$ into $\cT_{2}$ such that: 

        \begin{itemize}
            \item if $(\bar{s}_{1}, \sqsubset_{1})$, $(\bar{s}_{2}, \sqsubset_{2}) \in \mathrm{Eseq}_{\leq m}(\cT_{2})$, $\vert \mathrm{last-lev}(\bar{s}_{1}) \vert = m_{\ell(\ast)} = \vert \mathrm{last-lev}(\bar{s}_{2}) \vert$ and $\bar{s}_{2} \rest \mathrm{init-lev}(\bar{s}_{1}) = \bar{s}_{2} \rest \mathrm{init-lev}(\bar{s}_{2})$, \underline{then} we have $\bfc(h(\bar{s}_{1}), \sqsubset_{1}) = \bfc(\bar{s}_{2}, \sqsubset_{2})$. 
        \end{itemize}
    \end{enumerate}

    3) For $\cT_{1}$, $\cT_{2} \in \bfT$, we have $\cF_{2} \to^{\ell} (\cF_{1})_{\sigma}^{\bar{m}}$ \underline{when} (A) $\Rightarrow$ (B), where: 

    \begin{enumerate}
        \item[(A)] $\bfc \colon \mathrm{Eseq}_{\bar{m}}(\cT_{2}) \to \sigma$. 

        \item[(B)] There is a semi embedding $h$ if $\ell_{1}$, and an embedding if $\ell_{2}$ from $\cT_{1}$ into $\cT_{2}$ such that: 

        \begin{itemize}
            \item if $\bar{s}_{1}$, $\bar{s}_{2} \in \mathrm{Eseq}_{\bar{m}}(\cT_{2})$, \underline{then} we have $\bfc(h(\bar{s}_{1})) = \bfc(\bar{s}_{2})$. 
        \end{itemize}
    \end{enumerate}
\end{definition}

\begin{claim}\label{f4}
    The obvious implications hold. In particular, if $\bar{m} = \langle m_{\ell} \colon \ell \leq \ell(\ast) \rangle$ and $\cT_{\ell + 1} \to^{2} (\cT_{\ell})_{\sigma}^{\mathrm{end}(1, \bar{m} \rest (\ell + 1))}$ for $\ell < \ell(\ast)$, then $\cT_{\ell(\ast)} \to^{2} (\cT_{0})_{\sigma}^{\bar{m}}$. 
\end{claim}

\begin{majorclaim}\label{f5}
    In $\bfV^{\bbP}$ we have $\cT_{2} \to^{2} (\cT_{1})_{\sigma}^{\rm{end}(1, \bfm)}$ \underline{when} (see Remark \ref{f7}) for a suitable $m(\ast)$: 
    
    \begin{enumerate}
        \item[(a)]   we have: 

        \begin{itemize}
            \item[($\bullet _1$)] $\kappa = \kappa^{< \kappa}$, 
        
            \item[($\bullet _2$)] $ \lambda \to (\kappa^{+})_{\Upsilon}^{m(\ast)},$ where $ \Upsilon = 2^{\kappa}$, $\lambda$ regular, really $\lambda = (\beth_{m(\ast) - 1}(\kappa^{+}))^{+},$ 

            \item[$(\bullet_{3})$] $\sigma < \kappa$ and $\sigma \geq \aleph_{0}$. 
        \end{itemize}

        \item[(b)] $\bbP = \Cohen(\kappa, \lambda),$ 
        
        \item[(c)] $\cT_{2} \in \bfT$ expands $({}^{\kappa(+) >}2, \lhd)$ in $\bfV^{\bbP},$
        
        \item[(d)] In $\bfV^{\bbP},$ $\cT_{1} \in \bfT_{\kappa, \kappa}$ and $\cT_{1} \subseteq \cT_{1}^{+},$ where $\cT_{1}^{+}$ expands $({}^{\kappa >}2, \lhd)$ and  so $\otp(\cT_{1, [\alpha]}, <_{\cT_{1, \alpha}}) < \kappa$ for $\alpha < \kappa,$ see \ref{a31}(3),
    \end{enumerate}
\end{majorclaim}

\begin{remark}\label{f7}
    1) As in \cite{Sh:289}, $m(\ast) = 2m$ suffice. More on the value of $m(\ast)$, see \cite{Sh:1258}  (see more in \cite{Sh:F2064} and \cite{Sh:F1523}). 

    2) Can we  for $\to_{\mathrm{wk}}^{2}$ get a better to $m(\ast)$? Intend to return to this in \cite{Sh:F2064} and \cite{Sh:F1523}.   
\end{remark}

\begin{PROOF}{\ref{f5}}
    First, 
    
    \begin{enumerate}
        \item[$\boxdot_{1}$] Without loss of generality, $\cT_{1} \in \bfV$ is an object (not just  a $\bbP$-name).  
    \end{enumerate}
    
    [Why? Let $\name{\cT}_{1}$ be a $\bbP$-name, then for some $u \in [\lambda]^{\leq \kappa}$ we have $\name{\cT}_{1}$ is a $\bbP_{u}$-name. We can force by $\bbP_{u},$ so as $\bbP / \bbP_{u} = \Cohen(\kappa, \lambda),$ we are done.]
    
    So $\kappa, \cT_{1}, \name{\cT}_{2}$ are well defined ($\name{\cT}_{2}$ a $\bbP$-name). Let $\name{\bfc}$ be a $\bbP$-name, $\name{\bfc}\colon \mathrm{Eseq}(\name{\cT}_{2}) \to \sigma,$ without loss of generality  be such that: 
    
    \begin{enumerate}
        \item[$\boxdot_{2}$] if $(\bar{t}, \sqsubset) \in \mathrm{Eseq}(\cT_{2}),$ then from $\name{\bfc}(\bar{t}, \sqsubset)$ we can compute:

        \begin{enumerate}
            \item[(a)]$\name{\bfc}(\bar{t}, \sqsubset')$ when $\sqsubset' \in \rm{order}(\bar{t}, \cT)$, 

            \item[(b)] the similarity type of $\bar{t}$ in $\cT_{2}$,

            \item[(c)] the similarity type of $\name{\bfc}((\bar{t}, \sqsubset) \rest u)$ when $u \subseteq \dom(\bar{t}), \bar{t} \rest u \in \mathrm{Eseq}(\name{\cT}_{2}).$
        \end{enumerate} 
    \end{enumerate}
    
    Let  $\name{\bar{\eta}} = \langle \name{\eta}_{\alpha}\colon \alpha < \lambda \rangle$ be the generic of $\bbP,$  so $\Vdash$``$\name{\eta}_{\alpha} \in {}^{\kappa}2$'' and let $\name{\bar{\eta}}_{u} = \langle \name{\eta}_{\alpha}\colon \alpha \in u \rangle$ for $u \subseteq \lambda.$ 
    
    Next, in $\bfV,$ we choose: 
    
    \begin{enumerate}
        \item[$(*)_{1}$]
        
        \begin{enumerate}
            \item[(a)] let $\chi > \lambda$ and $<_{\chi}^{\ast}$ a well ordering of $\cH(\chi),$
            
            \item[(b)] let $\mathfrak{B} \prec \mathfrak{A}_{0} = (\cH(\chi), \in, <_{\chi}^{\ast})$ be of cardinality $\kappa$ such that $[\mathfrak{B}]^{< \kappa} \subseteq \mathfrak{B}$ and $\lambda, \kappa, \mu, \sigma, \cT_{1}, \name{\cT}_{2}, \name{\bfc} \in \mathfrak{B};$
            
            \item[(c)] let $u_{\ast} = \mathfrak{B} \cap \lambda \in [\lambda]^{\kappa},$
            
            \item[(d)] let $\bfG_{u_{\ast}} \subseteq \bbP_{u_{\ast}}$ be generic over $\bfV_{0} = \bfV,$ and let $\bfG \subseteq \bbP$ be generic over $\bfV_{0}$ such that $\bfG_{u_{\ast}} \subseteq \bfG,$
            
            \item[(e)] let $\bar{\eta}_{u} = \langle \name{\eta}_{\alpha}[\bfG]\colon \alpha \in u \rangle,$ for $u \subseteq \lambda,$
            
            \item[(f)] let $\bfV_{1} = \bfV_{0}[\bfG_{u_{\ast}}] = \bfV_{0}[\bar{\eta}_{u_{\ast}}],$
            
            \item[(g)] let $\bfV_{2} = \bfV[G] = \bfV_{0}[\bar{\eta}_{\lambda}] = \bfV_{1}[\bar{\eta}_{\lambda \setminus u_{\ast}}].$ 
        \end{enumerate}
    \end{enumerate}
    
    \begin{enumerate}
        \item[$(\ast)_{2}$] 
        
        \begin{enumerate}
            \item[(a)] let $\name{\cT}_{0}$ be the $\bbP$-name of the sub-structure of $\name{\cT}_{2}$ with set of elements $\{ \name{\eta}\colon \name{\eta}$ is a canonical $\bbP$-name of a member of $\name{\cT}_{2}$ and this name belongs to $\mathfrak{B} \},$
            
            \item[(b)] let $\delta_{\ast} = \delta(\ast)$ be  $\kappa^{+} = \min(\kappa^{+} \setminus u_{\ast}) = \kappa^{+} \cap u_{\ast},$ 
             noting $\delta_{\ast}$ has cofinality $\kappa$ because $u_{\ast} = \mathfrak{B} \cap \kappa^{+}, [\mathfrak{B}]^{< \kappa} \subseteq \mathfrak{B}$ and $\Vert \mathfrak{B} \Vert = \kappa,$ 
            
            \item[(c)] let $\langle \delta_{\varp}\colon\varp < \kappa \rangle$ be increasing continuous with limit $\delta_{\ast}$ in $\bfV_{0},$
            
            \item[(d)] $\gB_{2} = \gB[\bfG_{\lambda}], \, \gB_{1} = \gB_{2} \rest \{ \name{\tau}[\bfG_{\lambda}]\colon\name{\tau}$ is a $\bbP_{u}$-name from $\gB$ for some  $u \in [\lambda]^{\leq \kappa} \cap \gB \},$ $ \gB_{0} = \gB,$ so $\gB_{2} \prec \mathfrak{A}_{2} = \cH(\chi)[\bfG_{\lambda}]$ and $\gB_{1} \cap {}^{\kappa \geq} \lambda = \gB_{2} \cap ({}^{\kappa \geq} \lambda)^{\bfV_{1}}.$  
        \end{enumerate}
    \end{enumerate}
    
    Clearly, 
    
    \begin{enumerate}
        \item[$(\ast)_{3}$] 
        
        \begin{enumerate}
            \item[(a)] $\Vdash_{\bbP_{\lambda}}$``$\name{\cT}_{0} \subseteq \name{\cT}_{2}$ is closed under initial segments, is of cardinality $\kappa$ and has $\delta_{\ast}$ levels and is closed under unions of increasing chains of length $< \kappa$ and $\nu \in \name{\cT}_{0} \Rightarrow \nu^{\smallfrown} \langle 0 \rangle, \, \nu^{\smallfrown} \langle 1 \rangle \in \name{\cT}_{0},$  and $\alpha < \delta_{\ast} \Rightarrow (\forall \nu \in \cT_{0})(\exists \rho)[\nu \lhd \rho \in \cT_{0} \wedge \lg(\rho) \geq \alpha],$ so $\name{\cT}_{0} \in \bfT$'',
            
            \item[(b)] $\name{\cT}_{0}$ is actually a $\bbP_{u_{\ast}}$-name and we can use $\delta_{\ast} = \mathfrak{B} \cap \kappa^{+}$ as its set of levels. 
        \end{enumerate}
    \end{enumerate}
    
    \begin{enumerate}
        \item[$(\ast)_{4}$] 
        
        \begin{enumerate}
            \item[(a)] let $\cT_{0} = \name{\cT}_{0}[\bfG_{u_{\ast}}], \, \bfc_{0} = \bfc \rest \eseq(\cT_{0})$ so they are from $\bfV_{1},$
            
            \item[(b)] let $\bbP_{\ast} = \bbP / \bfG_{u_{\ast}} = \bbP_{\lambda \setminus u_{\ast}}.$ 
        \end{enumerate}
    \end{enumerate}
    
    \begin{enumerate}
        \item[$(\ast)_{5}$]
        
        \begin{enumerate}
            \item[(a)] for each $\alpha \in \lambda \setminus u_{\ast},$ in $\bfV_{1}[\eta_{\alpha}]$ there is $\eta_{\alpha}^{\bullet} \in \lim_{\delta_{\ast}}(\cT_{0})^{\bfV_{1}[\eta_{\alpha}]}$ hence $\lg(\eta_{\alpha}^{\bullet}) = \delta_{\ast}$ such that $\varp < \delta_{\ast} \Rightarrow \eta_{\alpha}^{\bullet} \rest \varp \in \cT_{0}$ and $\eta_{\alpha}^{\bullet}$ is a generic $\delta_{\ast}$-branch of $\cT_{0}$ over $\bfV_{1},$ 
            
            \item[(b)] Clearly $\lim_{\delta_{\ast}}(\cT_{0})^{\bfV_{1}[\eta_{\alpha}]} \subseteq \cT_{2}[ \mathbf{G} 
            _{\lambda}].$ 
        \end{enumerate}
    \end{enumerate}
    
    We shall work in $\bfV_{1}.$ 
    
    \begin{enumerate}
        \item[$(\ast)_{6}$] (in $\bfV_{1}$) for $\alpha \in \lambda \setminus u_{\ast},$ let $\name{\eta}_{\alpha}^{\bullet}$ be a $\bbP_{\{ \alpha \}}$-name of $\eta_{\alpha}^{\bullet},$ so without loss of generality for some $\kappa$-Borel function $\bfB\colon{}^{\kappa} 2 \to {}^{\delta(\ast)} 2,$ from $\bfV_{1}$ we have $\Vdash$``$\name{\eta}_{\alpha}^{\bullet} = \bfB(\name{\eta}_{\alpha})$ is as above in $(\ast)_{5}$(a)'';  
 note that $\bfB$ does not depend of $ \alpha$.
    \end{enumerate}

    [Why? See the proof of $ (\ast)_7$]
    \begin{enumerate}
        \item[$(\ast)_{7}$] Without loss of generality, 
        
        \begin{enumerate}
            \item[(a)] Recall we had in $(\ast)_{2}$(c) an increasing sequence $\langle \delta(\varp) \coloneqq \delta_{\varp} \colon \varp < \kappa \rangle$ so that $\delta(\ast) = \bigcup_{\varp < \kappa}\delta(\varp)$
        
            \item[(b)]  Let $\langle \bfB_{\varp}\colon\varp < \kappa \rangle$ be such that $\bfB_{\varp}$ is a $\kappa$-Borel function from ${}^{\kappa}2$ to ${}^{\delta(\varp)}2$  from $\bfV_{1}$ and $\Vdash$``$\name{\eta}_{\alpha}^{\bullet} \rest \delta_{\varp} = \bfB_{\varp}(\name{\eta}_{\alpha})$'' for $\alpha \in \lambda \setminus u_{\ast}$,
            
            \item[(c)] In $\bfV_{0}$, let $\name{\bfB}, \name{\bfB}_{\varp}$ be $\bbP_{u_{\ast}}$-names forced to be as above, can be considered as $\bbP_{u_{\ast}}$-name. 
        \end{enumerate}
    \end{enumerate}

    [Why? 

    For $\eta \in {}^{\omega} 2$, define $\nu = \nu_{\eta} \in {}^{\kappa} \kappa$ as follows: for $\varp < \kappa$, we let $\nu_{\eta}(\varp)$ be the the unique $\zeta < \chi$ such that $\name{\eta}_{\alpha}(\zeta) = 1 \wedge \varp = \otp\{ \xi < \zeta \colon \name{\eta}_{\alpha}(\xi) = 1 \}$ if there is such $\zeta$ and $0$ otherwise. Now, $\cT_{0} \cap {}^{\delta(\varp)} \lambda$ has $\leq \kappa$ members, hence we can interprete $\bfB_{\varp}(\eta) = \bfB_{\varp}(\eta \rest (\varp + 1))$, a member of $\cT_{0} \cap {}^{\delta(\varp)} \lambda$, which is $\unlhd$-increasing with $\varp$ and $\bfB(\eta) = \bigcup \{ \bfB_{\varp}(\eta \rest (\varp + 1)) \colon \varp < \kappa \}$. That is, we can first choose $ \mathbf{B}_{\varp}$ 
    by induction on $ \varp < \kappa$ such that $\bfB_{\varp}(\eta)$ depend just on $\nu_{\eta} \rest (\varp + 1)$ and $(\ast)_{8}$ below holds  
    and then choose $\bfB$.]
    
    \begin{enumerate}
        \item[$(\ast)_{8}$] If $\varp < \kappa, \, \nu \in {}^{\kappa} 2, $ and $ \bfB(\nu) = \rho,$ then $\bfB_{\varp}(\nu \rest \zeta) = \rho
    \upharpoonright \delta_{\varp}  
    .$
    \end{enumerate}
    
    Recalling we work in  $\bfV_{1}$ and $\bfV$, $\bfV_{1}$ have the same cardinal arithmetic, there are $\cU, \bar{N}$ such that: 
    
    \begin{enumerate}
        \item[($\ast)_{9}$]  
        
        \begin{enumerate}
            \item[(a)] $\cU \subseteq \lambda \setminus u_{\ast},$
            
            \item[(b)] $\otp(\cU)$ 
               is $\kappa^{+}$,
            
            \item[(c)] $\bar{N} = \langle N_{u}\colon u \in [\cU]^{\leq \bfm} \rangle,$
            
            \item[(d)] $N_{u} \cap N_{v} \subseteq N_{u \cap v}$ when $u, v \in [\cU]^{\leq \bfm}$,
            
            \item[(e)] $\kappa, \cT_{1}, \name{\cT}_{2}, \name{\bfc}, \cB, \cB_{1} \in N_{u} \prec \mathfrak{A}_{0}, \Vert N_{u} \Vert = \kappa, [N_{u}]^{< \kappa} \subseteq N_{u}$ for $u \in [\cU]^{\leq \bfm}$,
            
            \item[(f)] if $u, v \in [\cU]^{\leq m}$ and $ \vert u \vert = \vert v \vert,$ then there is a unique isomorphism $ \mathbf{g} 
            _{u, v}$ from $N_{v}$ onto $N_{u}$ 
               and it 
            is the identity on $(\kappa+1) \cup \{ \bfc, \name{\eta}, \bfB \}$ hence on $\cT_{2}, \langle \name{\eta}_{\alpha}, \name{\eta}_{\alpha}^{\bullet}\colon\alpha < \lambda \setminus u_{\ast} \rangle$ and maps $v$ onto $u,$ 
              and if $ v_ 1 \subseteq v, u_1 = \mathbf{g}_{u, v} [v_1] \subseteq u$  then 
              $  \mathbf{g} _{u_1, v_{1}} \upharpoonright N_v \subseteq
              \mathbf{g} _{u,v}$,
            
            \item[(g)] $\langle \name{\eta}^{\bullet}_{\alpha}\colon\alpha \in \cU \rangle$ is $<_{\name{\cT}_{2}}^{\ast}$-increasing. 
        \end{enumerate}
    \end{enumerate}
    
    [Why?   
        By clause (a)($ \bullet _3$)  of the assumption of 
        \ref{f5}  as 
    in \cite{Sh:289},  there is such $\bfm(\ast)$ (see \ref{f7}), for clause (g) recall that $<_{\name{\cT}_{2}}^{\ast}$ is a (linear) well-ordering.] 
    
    \begin{enumerate}
        \item[$(\ast)_{10}$] \underline{notation}:
        
        \begin{enumerate}
            \item[(a)] for finite $u \subseteq \cT_{0, [\varp]}$ for some $\varp < \delta_{\ast},$ let

            \begin{enumerate} 
\item[($ \alpha $)]                
$\bfH_{u} := \{h\colon h$ is a one-to-one function from $u$ into $\cU \},$

\item[($ \beta  $)]  We let $ \mathbf{F} _u$ be the set of $ f $ such that for some 
   $(\bar{s}, \sqsubset) = (\bar{s}_{f}, \sqsubseteq_{f})$  and $ g= g_f $ we have
      \begin{enumerate} 
            \item[($ \bullet _1$)] $ (\bar{ s}, \sqsubset) \in \mathrm{Eseq}({\mathscr T } _1)$, 
            
            \item[($ \bullet _2$)] $ \mathrm{last-ele}[ \bar{ s }] = v$, and note $\bar{s} = \cl_ {\mathscr{T}_{1}} ( \{ s_i : i < \lg(\bar{s}) \} )$ so we let $ u= u_f$, 

            \item[($ \bullet _3$)] $ g $ is an increasing function from $ \{ \lev_ {\mathscr{T}_{1} } ( s_i ): i < \lg( \bar{ s } )\}$ into $\mathrm{ht}(\cT_{0}) = \delta_{\ast}$,
                 
             \item[($ \bullet _4$)]  $f$ is a semi embedding of $ {\mathscr T } _1 \upharpoonright  \{ s_i : i < \lg( \bar{ s } \}$ into $\cT_{0}$ mapping $\{ s_{i} \colon i \in v \}$ onto $u$,
             
              \item[($ \bullet _5$)] so $ f(s_i) \in {\mathscr T } _{0, [g(i)]}$ 
                 where $ \level_{{\mathscr T } _1}(f(s_{i})) = g(\lev_{{\mathscr T } _1}(s_i))$.
      \end{enumerate} 
  
\end{enumerate} 
            \item[(b)] If in $\bfV_{2},$ $(\bar{s}, \sqsubset) \in \mathrm{Eseq}(\cT_{0}), u = \mathrm{last-ele}[\bar{s}] \in [\cT_{0, [\varp]}]^{\leq \bfm 
            }$ and $h \in \bfH_{u},$ then we let $\bar{s}^{[h]}$ be the $(\bar{t}, \sqsubset) \in \mathrm{Eseq}(\cT_{1})$ such that $\lg(\bar{t}) = \lg(\bar{s})$ and $i \in \lg(\bar{s}) \setminus u \Rightarrow t_{i} = s_{i}$ and $i \in  \mathrm{last-lev} (\bar{s}) \Rightarrow t_{i} = s_{i}^{\smallfrown} 
            \name{ \eta }_{i}' $ where $ \name{\eta }_{i}' = 
            \eta_{h(s_{i})}^{\bullet} \rest [ \varp(\bar{s}), \delta_{\ast} ).$ 
        \end{enumerate}
    \end{enumerate}
    
    We define $\AP := \bigcup_{\varp < \kappa} \AP_{\varp},$ where $\AP_{\varp}$ is the set of objects $\bfa$ which consists of (so $\varp = \varp_{\bfa}, \bar{\nu} = \bar{\nu}_{\bfa},$ etc).
    
    \begin{enumerate}
        \item[$\boxplus_{\bfa}^{0}$] 
        
        \begin{enumerate}
            \item[(a)] $\varp < \kappa,$
        
            \item[(b)] 

            \begin{enumerate}
                \item[$\bullet_{1}$] $\bar{\nu} = \langle \nu_{\rho}\colon\rho \in \cT_{1, [\varp]} \rangle$ is with no repetitions;  
            (will serve as an approximation to an embedding), 
                \item[$\bullet_{2}$] $\bff$ is an $\subseteq_{\fl}$-embedding of $\cT_{1, [\leq \varp]} = \cT_{1} \rest \{ \eta \in \cT_{2} \colon \lg(\eta) \leq \varp \}$ into $\cT_{0}$, 

                \item[$\bullet_{3}$] $\bff(\rho) = \nu_{\rho}$ for $\rho \in \cT_{1, [\varp]}$. 
            \end{enumerate}
            
            \item[(c)] $\bar{\eta} = \langle \eta_{\rho}\colon\rho \in \cT_{1, [\varp]} \rangle;$  
            ($\nu_{\rho}$ will serve as condition in $ \mathbb{P} _{\{ \alpha \} }$ for some $ \alpha$), 
            
            \item[(d)] $\nu_{\rho} \in \cT_{0, [\zeta]}$ and $\eta_{\rho} \in {}^{\kappa >}2$ for some $\zeta = \zeta_{\bfa} < \kappa$ 
            (for all $\rho \in \cT_{1, [\varp]}$), 
            
            \item[(e)] $\bfB_{\zeta}(\eta_{\rho}) = \nu_{\rho},$
            
            \item[(f)] $\bar{p} = \langle p_{u, h}\colon u \in [\cT_{0,  [\varp]}]^{\leq 
            \bfm)}, 
            h \in \bfH_{u} \rangle,$ where $p_{u, h} \in \bbP_{\lambda \setminus u_{\ast}},$
            
            \item[(g)] $p_{u, h} \in N_{h[u]}$ and $[\rho \in u \wedge h(\rho) = \alpha \Rightarrow p_{u, h}(\alpha) = \eta_{\rho}],$
            
        \end{enumerate}
    \end{enumerate}
    
    \begin{enumerate}
        \item[$ $]
        
        \begin{enumerate}
            \item[(h)] if $h_{1}, h_{2} \in \bfH_{u}$ and $h_{1}[u] = h_{2}[u]$ \underline{then} $\alpha \in \dom(p_{u, h_{1}}) \setminus h_{1}[u] \Rightarrow p_{u, h_{1}}(\alpha) = p_{u, h_{2}}(\alpha),$
            
            \item[(i)] if $h_{1}, h_{2} \in \bfH_{u}$ and $\rho_{1}, \rho_{2} \in u \Rightarrow
            [h_{1}(\rho_{1}) < h_{1}(\rho_{2}) \equiv h_{2}(\rho_{1}) < h_{2}(\rho_{2})]$,  
            \underline{then} $\mathbf{g} 
             _{h_{2}[u], h_{1}[u]}$ maps $p_{u, h_{1}}$ to $p_{u, h_{2}},$
            
            \item[(j)] if $u_{1}, u_{2} \in [^{\varp} 2]^{\leq \bfm}$, $h_{\ell} \in \bfH_{u_{\ell}}$ for $\ell = 1, 2$, and $h_{1},  h_{2}$ are compatible, then $(p_{u_{1}, h_{1}} \rest N_{h_{1}[u_{2}]})$ and $ p_{u_{2}, h_{2}}$ are compatible.  
            
                 
        \end{enumerate}
    \end{enumerate}

    Let further 
    
    \begin{enumerate}
        \item[$\boxplus_{\bfa}^{1}$] $\AP^{+} = \bigcup \{ \AP_{\varp}^{+}\colon\varp < \kappa \},$ where $\AP_{\varp}^{+}$ is the set of $\bfa \in \AP_{\varp}$ such that:
        
        \begin{enumerate}
            \item[(k)] $p_{u, h}$ forces a value to $\name{\bfc}(\bar{s}, \sqsubset)$ \underline{when} for some $f \in \mathbf{F}_{u}$ we have $\bar{s} = \bar{s}_{f}$, $u_{f} = u$, $\sqsubset = \sqsubset_{f} = \{ (s_{\ell}, s_{k}) \colon s_{\ell}, s_{k} \in u$ and $\{ h(s_{\ell}) < h(s_{k}) \}$.  
        \end{enumerate}
    \end{enumerate}
    
    
    \begin{enumerate}
        \item[$\boxplus_{2}$]  we define the two-place relation $\leq_{\AP}$ as follows: $\bfa_{1} \leq_{\AP} \bfa_{2}$ \underline{iff}: 
        
        \begin{enumerate}
            \item[(a)] $\bfa_{1}, \bfa_{2} \in \AP,$
            
            \item[(b)] $\varp_{1} = \varp_{\bfa_{1}} \leq \varp_{\bfa_{2}} = \varp_{2},$ 
            
            \item[(c)] if $\iota \in \{ 0, 1 \},$ $\rho_{1} \in {}^{\varp(1)}2, \rho_{1}^{\smallfrown} \langle \iota \rangle \unlhd \rho_{2} \in {}^{\varp(2)} 2,$ then $\eta_{\bfa_{1}, \rho_{1}} {}^{\smallfrown} \langle \iota \rangle \unlhd \eta_{\bfa_{2}, \rho_{2}}$ and $\nu_{\bfa_{1}, \rho_{1}} {}^{\smallfrown} \langle \iota \rangle \unlhd \nu_{\bfa_{2}, \rho_{2}},$ 
            
            \item[(d)] if $n \leq m,$ $u_{1} \in [\cT_{1, [\varp(1)]}]^{n} \subseteq [{}^{\varp(1)}2]^{n}, u_{2} \in [\cT_{1, \varp(1)}]^{n}, u_{1} = \{ \rho \rest \varp(1)\colon\rho \in u_{2}  \rangle,$ and $ h_{\ell} \in \bfH_{u_{\ell}}$ for $\ell = 1, 2$,  \underline{then} $\rho \in u_{2} \Rightarrow h_{2}(\rho) \rest \varp(1) = h_{1}(\rho \rest \varp(1)),$ and $p_{u_{1}, h_{1}} \leq_{\bbP_{\ast}} p_{u_{2}, h_{2}}.$  
        \end{enumerate}
    \end{enumerate}
    
    \begin{enumerate}
        \item[$\boxplus_{3}$] $(\AP, <_{\AP})$ is a partial order.  
    \end{enumerate}
    
    [Why? Read the definitions.] 
    
    \begin{enumerate}
        \item[$\boxplus_{4}$] for $\varp = 0$ there is $\bfa \in \AP_{\varp}.$ 
    \end{enumerate}
    
    [Why? Trivial.]
    
    \begin{enumerate}
        \item[$\boxplus_{5}$] 

        \begin{enumerate}
            \item[(a)] If $\varp < \kappa$ is a limit ordinal and $\bfa_{\zeta} \in \AP_{\zeta}$ for $\zeta < \varp$ is $\leq_{\AP}$ increasing, \underline{then} there is $\bfa_{\varp} \in \AP_{\varp}$ such that $\zeta < \varp \Rightarrow \bfa_{\zeta} <_{\AP} \bfa_{\varp}.$ 

            \item[(b)] In clause (a), we can add: if $\bfa_{\zeta} \in \AP^{+}_{\zeta}$ for $\zeta < \varp$ \underline{then} $\bfa_{\varp} \in \AP_{\varp}^{+}$.
        \end{enumerate}

    \end{enumerate}
    
    [Why? Straightforward recalling that $[N_{u}]^{< \kappa} \subseteq N_{u}$.]
    
    \begin{enumerate}
        \item[$\boxplus_{6}$] if $\varp < \kappa$ and $\bfa \in \AP_{\varp}$ there is $\bfb$ such that: 
        
        \begin{enumerate}
            \item[(a)] $\bfb \in \AP_{\varp},$
            
            \item[(b)] $\bfa \leq_{\AP} \bfb,$
            
            \item[(c)] $\varp_{\bfb} = \varp_{\bfa} +1,$ 
            
        \end{enumerate}
    \end{enumerate}
    
    [Why? Straightforward.]
    
    \begin{enumerate}
        \item[$\boxplus_{7}$]  if $\bfa \in \AP_{\varp},$ \underline{then} there is $\bfb \in \AP_{\varp}^{+}$ such that $\bfa \leq_{\AP} \bfb.$
    \end{enumerate}

    Toward this, we first show:  

    \begin{enumerate}
        \item[$\boxplus_{7.1}$] if $\bfa \in \AP_{\varp}$, $u \in [\cT_{1, [\varp]}]^{\leq \bfm}$ and $p, \bar{s}$, $\sqsubset$ are as in clause (k) of $\boxplus_{\bfa}^{1}$ \underline{then}  there is $\bfb \in \AP_{\varp}$ such that $\bfa \leq_{\AP} \bfb$ and $p_{\bfa, u, h}$ forces a value to $\name{\bfc}(\bar{s}, \sqsubset)$.   
    \end{enumerate}

    [Why? First choose $p \in N_{h[u]} \cap \bbP$ above $p_{\bfa, u, h}$ forcing a value to $\bfc(\bar{s}, \sqsubset)$. Then choose $p_{\bfb, u_{1}, h}$ for relevant pairs by combining $p_{\bfa, u_{1}, h_{1}}$ and $p$ (so $p_{\bfb, u, h} = p$) remembering $\boxplus_{2}$.]
    
    \begin{enumerate}
        \item[$\boxplus_{8}$] we can choose $\bfa_{\varp} \in \AP_{\varp}$ by induction on $\varp < \kappa$ such that $\zeta < \varp \Rightarrow \bfa_{\zeta} \leq_{\AP} \bfa_{\varp}$ and $\varp = \zeta + 1 \Rightarrow \bfa_{\varp} \in \AP_{\varp}^{+}.$
    \end{enumerate}
    
    [Why? Use $\boxplus_{4}$ for $\varp = 0,$ use $\boxplus_{5}$ for $\varp$ a limit ordinal and $\boxplus_{6} + \boxplus_{7}$ for $\varp = \zeta +1$.] 
    
    
    Now define $h\colon\cT_{1} \to \cT_{2}$ by $\rho \in \cT_{1, [\varp]} \Rightarrow h(\rho) = \eta_{\bfa_{\varp}, \rho}$ and check.]

    Lastly, 

    \begin{enumerate}
        \item[$\boxplus_{9}$] $\bigcup \{ \bff_{\bfa_{\varp}} \colon \varp < \kappa \}$ is an embedding as is desired.  
    \end{enumerate}
\end{PROOF}

\begin{remark}\label{f8}
    1) Using the end of \S1A we get the desired conclusions.
    
    2) In \ref{f5} we may state and prove the variant with the square bracket. In more details,
    
    \begin{enumerate}
        \item[(A)] We say $\bar{a}, \bar{b} \in \eseq_{m}(\cT)$ are weakly $\cT$-similar as before but omitting ``$a_{\ell} <_{\cT}^{\ast} a_{k} \Leftrightarrow b_{\ell} <_{\cT}^{\ast} b_{k}$''; that is, when $\lg(\bar{a}) = \lg(\bar{b})$ and for some permutation $\pi$ of $\lg(\bar{a})$ for $k, \ell, m < \lg(\bar{a}),$ we have: 
        
        \begin{enumerate}
            \item[(a)] $\lev_{\cT}(a_{\ell}) = \lev_{\cT}(a_{k}) \Rightarrow  \lev_{\cT}(b_{\pi(\ell)}) = \lev_{\cT}(b_{\pi(k)}),$
            
            \item[(b)] $a_{\ell} <_{\cT} a_{k} \Leftrightarrow b_{\pi(\ell)} <_{\cT} b_{\pi(k)}.$
        \end{enumerate}
        
        \item[(B)] We  replace ``$\cT_{2} \to (\cT_{1})_{\sigma}^{\rm{end}(k, m)}$'' by ``$\cT_{2} \to {[\cT_{1}]_{\sigma, j}^{\rm{end}(k, m)}}$'' for suitable finite $j$ which means that \ref{a40}(1)$(\ast)\bullet_{2}$ is replaced by: 
        
        \begin{enumerate}
            \item[$\bullet_{2}$] if $n < \omega$ and $\bar{a} \in \eseq_{n}(\cT_{2}),$ \underline{then} the following set has at most $j$ elements: \newline $\{ \bfc'(\bar{b})\colon\bar{b} \in \eseq_{n}(\cT_{2})$ is weakly $\cT_{1}$-similar to $\bar{a}$ and $\ell < n \wedge (k \leq \vert \Lev(\bar{a})) \setminus \lev(a_{\ell}) \vert) \Rightarrow b_{\ell} = a_{\ell} \}.$
        \end{enumerate}
    \end{enumerate}
    
    3) This will be enough for the model theory, and if we use minimal j (well, depends on $\bar{s} \rest v(\bar{s})$), we get back to \ref{f5}. 
\end{remark}

\begin{conclusion}\label{f11}
    Assume  $ \kappa = \kappa ^{< \kappa }$  and 
    $ \kappa < 
    \chi \leq \infty$ is limit and $\theta \in [\kappa, \chi) \Rightarrow 2^{\theta} = \theta^{+},
    $  
    and if $ \chi $ is singular then $ 2^ \chi = \chi ^+$.

    We can find a forcing notion $\bbP$ such that: 
    \begin{enumerate}
        \item[(a)]  $ \mathbb{P} $ is a $ ( < \kappa ) )$-complete 
        forcing notion of cardinality  $ \chi ^{< \chi }$,
        
        \item[(b)] $\bbP$ collapses no cardinal, changes no cofinality,
        
        \item[(c)] $2^{\theta} < \theta^{+ \omega}$ 
               for $\theta   \in [ \kappa , \chi )$, 
        
        \item[(d)] in $\bfV^{\bbP},$ if $\theta^{+ \omega} \leq \chi,$ 
        then 
        for every $ k, m < \omega $,  
        for some $n < \omega,$ for every $\cT_{1}  \in \mathbf{T} $  
        expanding $({}^{\theta > }2, \lhd)$ there is $\cT_{2} \in \mathbf{T} $  
        expanding $({}^{\theta(+n) >} 2, \lhd)$ 
            we have   
        $\cT_{2} \to (\cT_{1})_{\theta}^{\rm{end}( k,m) } .$ 
    \end{enumerate}
\end{conclusion}


\begin{discussion}\label{f22} 
    We may  like to  replace ${}^{\kappa >}2$ by ${}^{\kappa >}I$ and even use
    creature tree forcing, see Roslanowski-Shelah \cite{Sh:470},  \cite{Sh:777},  Goldstern-Shelah \cite{Sh:808}) but (in second thought, for $ \kappa = {\aleph_0}$ maybe see the paper with Zapletal  \cite{Sh:952}). That is, for $ \kappa > {\aleph_0}$ in each node we have a forcing notion which is quite complete, but of cardinality $ < \kappa $ = set of levels.
    
    So we do not have a tree but a sequence of creatures, $\langle \mathfrak{c} _ \varepsilon \colon \varepsilon < \hit({\mathscr T } )\rangle,$ such that for a colouring  we like to find $ \mathfrak{d}_ \varepsilon  \in  \Sigma (\mathfrak{c} _ \varepsilon )$ for $ \varepsilon < \kappa,$ which induces a sub-tree in which the colouring is $1$-end-homogeneous. 
    Alternatively we have $ \langle \mathfrak{c}_ \eta\colon\eta \in {\mathscr T } \rangle $  where $ \mathfrak{c} _ \eta $ is a creature with set of possible  values being in $\suc_{\mathscr T}(\eta)$, see \cite{Sh:777}.
    
    Clearly the answer  is that we can, but it is not clear how interesting it is. We can just, 
    
    \begin{enumerate}
        \item[$\boxdot$] replace $2$ by $\Upsilon \in [2, \kappa)$ and ${}^{\kappa >} 2$ by ${}^{\kappa >} \Upsilon$; in the Definition \ref{a28} replace $R_{\cT, \ell}$ ($\ell < 2$) by $R_{\cT, \ell}$ $(\ell < \Upsilon)$  and add: if $s \in \cT,$ then $\suc_{\cT}(s)$ is either a singleton or is $\{ s_{\ell}\colon\ell < \Upsilon \},$ where $s_{\ell} R_{\cT, \ell} s$ for $\ell < \Upsilon.$  
    \end{enumerate}
\end{discussion}

\bibliographystyle{amsalpha}
\bibliography{shlhetal}

\end{document}